\crefname{hypothesis}{Hypothesis}{Hypotheses}
\title{Smoothing algorithms for nonsmooth and nonconvex minimization over the stiefel manifold\thanks{2022/10/17.
\funding{The work of the third author was supported in part by the National Natural Science Foundation of China under grant 12171488, and by the Guangdong Province Key Laboratory of Computational National Science at the Sun Yat-sen University, grant 2020B1212060032. The work of the last author was supported in part by the National Natural Science Foundation of China under grant 11971499, and by the Guanddong Province Key Laboratory of Computational National Science at Sun Yat-sen University, grant 2020B1212060032.}}}
\author{Jinlai Zhu\thanks{School of Mathematics, Sun Yat-sen University, Guangzhou 510275, China (\email{zhujlai@mail2.sysu.edu.cn}).}
	\and Jianfeng Huang\thanks{School of Financial Mathematics and Statistics, Guangdong University of Finance, Guangzhou 510521, China (\email{47-072@gduf.edu.cn}).}
	\and Lihua Yang\thanks{School of Mathematics, Guangdong Provincial Key Laboratory of Computational Science, Sun Yat-sen University, Guangzhou 510275, China (\email{mcsylh@mail.sysu.edu.cn}).}
	\and Qia Li\thanks{Corresponding author. School of Computer Science and Engineering, Guangdong Province Key Laboratory of Computational Science, Sun Yat-sen University, Guangzhou 510275, China (\email{liqia@mail.sysu.edu.cn}).}}
\begin{document}

\maketitle

\begin{abstract}
We consider a class of nonsmooth and nonconvex optimization problems over the Stiefel manifold where the objective function is the summation of a nonconvex smooth function and a nonsmooth Lipschitz continuous convex function composed with an linear mapping. We propose three numerical algorithms for solving this problem, by combining smoothing methods and some existing algorithms for smooth optimization over the Stiefel manifold. In particular, we approximate the aforementioned nonsmooth convex function by its Moreau envelope in our smoothing methods, and prove that the Moreau envelope has many favorable properties. Thanks to this and the scheme for updating the smoothing parameter, we show that any accumulation point of the solution sequence generated by the proposed algorithms is a stationary point of the original optimization problem. Numerical experiments on building	graph Fourier basis are conducted to demonstrate the efficiency of the proposed algorithms.
\end{abstract}

\begin{keywords}
Nonsmooth optimization, Stiefel manifold, Smoothing methods, Moreau envelope, Graph Fourier basis problem
\end{keywords}

\begin{AMS}
65K05, 90C26, 90C30
\end{AMS}
\section{Introduction}
In this paper, we consider minimizing a nonsmooth and nonconvex objective function with orthogonality constraints:
\begin{equation}\label{ep1}
\begin{cases}
		\min\limits_{X\in\mathbb{R}^{n\times p}} F(X):=f(BX)+h(X)\\
		{\rm s.t.}\ \ X^\top X=I_{p},\end{cases}
\end{equation}
where $p\leq n,$ $B\in\mathbb{R}^{q\times n},$ $f:\mathbb{R}^{q\times p}\to\mathbb{R}$ is convex, nonsmooth and Lipschitz continuous with Lipschitz constant $L_f,$  $h:\mathbb{R}^{n\times p}\to\mathbb{R}$ is smooth, possibly nonconvex and $\nabla h$ is Lipschitz continuous with Lipschitz constant $L_h.$ In particular, the feasible region of problem $\cref{ep1}$ is called the Stiefel manifold, denoted by
\begin{displaymath}
	\mathcal{S}_{n,p}:=\{X\in\mathbb{R}^{n\times p}: X^\top X=I_{p}\}.
\end{displaymath}\par
This model has a wide range of applications, such as the $\ell_1$ sparse principal component analysis (PCA) \cite{cai2013sparse,ma2013sparse,xiao2021exact}, and the $\ell_{2,1}$-norm regularized PCA \cite{cai2013sparse,ma2013sparse,ulfarsson2008sparse}, the physical compression model \cite{ozolicnvs2013compressed} and the feature selection problem \cite{tang2012unsupervised,yang2011l2}. In this paper we are particularly interested in its application to the graph Fourier basis problem, which is described in the following subsection. For more examples of nonsmooth optimization over the Stiefel manifold, we refer the reader to \cite{absil2019collection}.
\subsection{Application to graph Fourier basis}
\label{sec:GFB}
Next we focus on the application of problem $\cref{ep1}$ to the graph Fourier basis problem (GFB) that arises in graph signal processing. Graph signal processing is an emerging research field \cite{sandryhaila2014discrete,sardellitti2017graph,shuman2013emerging,shuman2016vertex}, which extends the theory and methods of classical signal processing and has been successfully applied in many practical problems. In particular, graph Fourier transform (GFT) is a fundamental tool of graph signal analysis \cite{sandryhaila2013discrete}. Alternative definitions of GFT have been introduced in \cite{pesenson2008sampling,sandryhaila2014discrete,sardellitti2017graph,shuman2013emerging,singh2016graph,zhu2012approximating} based on different motivations, such as establishing a basis with minimal variation, filtering signals defined on graphs, approximation of signal supported on graphs, and so on. Below, we briefly review three different definitions of GFT. The first one is based on spectral graph theory, where the graph Laplacian plays a central role, see e.g., \cite{shuman2013emerging} and the references therein. This definition works only for the undirected graphs, where the Laplacian eigenvectors minimize the $\ell_2$ norm variation of a graph signal. The second definition is based on the Jordan decomposition of the adjacency matrix and defines the associated generalized eigenvectors as the GFT basis \cite{sandryhaila2013discrete,sandryhaila2014discrete}, which is suitable for both undirected and directed graphs. Since the numerical computation of the Jordan decomposition often leads to well-known numerical instability, the authors in \cite{sardellitti2017graph} discussed the general situation of directed graphs, and introduced the third way to construct the Fourier basis of the graph. To illustrate this definition, we introduce some notations for graph signals. Let $\mathcal{G}=(\mathcal{V},\mathcal{E})$ be a connected and weighted graph consisting of a set of nodes (or vertices) $\mathcal{V}=\{1,2,\cdots,N\}$ and a set of edges $\mathcal{E}=\{w_{ij}\}_{i,j\in\mathcal{V}}$ such that $w_{ij}>0$ if there is a link from node $i$ to node $j$ and $w_{ij}=0$ otherwise. The matrix $W=[w_{ij}]\in\mathbb{R}^{N\times N}$ is called the adjacency matrix. A graph signal $s$ is a real-valued function defined on $\mathcal{V},$ and can be regarded as a vector in $\mathbb{R}^{N}.$ Let $Z=[Z_{1},\cdots,Z_{N}]\in\mathbb{R}^{N\times N}$ be the Fourier basis of the graph $\mathcal{G}.$ Then, as in \cite{sardellitti2017graph}, the first basis vector $Z_{1}$ is defined as $\mathbf{1}/\sqrt{N}$ where $\mathbf{1}\in\mathbb{R}^{N}$ is an all-one vector, and the remaining vectors $\tilde{Z}=[Z_{2},\cdots,Z_{N}]$ are constructed by solving the following optimization problem:
\begin{equation}\label{gfb1}
	\begin{cases}
		\min\limits_{\tilde{Z}\in\mathbb{R}^{N\times (N-1)}} \sum\limits_{m=2}^N\sum\limits_{i,j=1}^{N}\omega_{ij}[(\tilde{Z}_{m})_j-(\tilde{Z}_{m})_i]_+\\
		{\rm s.t.}\ \ \tilde{Z}^\top \tilde{Z}=I_{N-1}, \tilde{Z}^\top Z_{1}=0_{N-1},\end{cases}
\end{equation}
 where $[y]_{+}:=\max\{y,0\},$ for $y\in\mathbb{R}.$ In fact, we can see that problem \cref{gfb1} is a special case of problem $\cref{ep1}$ by suitable reformulation. Let $|\mathcal{E}|$ be the cardinality of $\mathcal{E},$ and suppose the $k$-th edge is from node $i_k$ to node $j_k$ for $k=1,\cdots,|\mathcal{E}|.$ Define $\tilde{B}\in\mathbb{R}^{|\mathcal{E}|\times N}$ as follows,
\begin{equation}\label{etb}
	(\tilde{B})_{k\ell}=\begin{cases}
		1,&\mbox{if}\ \ell=j_k,\\
		-1,&\mbox{if}\ \ell=i_k,\\
	    0,&\mbox{otherwise}.\end{cases}
\end{equation}
It is easy to check that the objective function of problem \cref{gfb1} can be rewritten as $\Psi(\tilde{B}\tilde{Z}),$ where $\Psi:\mathbb{R}^{|\mathcal{E}|\times (N-1)}\to\mathbb{R}$ is defined at $Y\in\mathbb{R}^{|\mathcal{E}|\times (N-1)}$ as follows \begin{equation}\label{ep}
\Psi(Y)=\sum\limits_{k=1}^{|\mathcal{E}|}w_{i_k,j_k}\sum\limits_{j=1}^{N-1}[Y_{k,j}]_+.
\end{equation}
Furthermore, let $\tilde{V}\in\mathbb{R}^{N\times(N-1)}$ be an orthogonal basis of the subspace $\{z\in\mathbb{R}^{N\times 1}: z^\top Z_{1}=0\},$ i.e., $\tilde{V}^\top \tilde{V}=I_{N-1}$ and $\tilde{V}^\top Z_{1}=0_{N-1}.$ Then we immediately have that
\begin{equation}\label{eb}
\begin{split}
&\quad\{\tilde{Z}\in\mathbb{R}^{N\times (N-1)}: \tilde{Z}^\top\tilde{Z}=I_{N-1}, \tilde{Z}^\top Z_{1}=0_{N-1}\}	\\
&=\{\tilde{V}X: X^\top X=I_{N-1}, X\in\mathbb{R}^{(N-1)\times(N-1)}\}.
\end{split}
\end{equation}
By \cref{ep} and \cref{eb}, $\tilde{Z}^{\ast}$ is an optimal solution of problem \cref{gfb1} if and only if $\tilde{Z}^{\ast}=\tilde{V}X^{\ast}$ and $X^{\ast}$ is an optimal solution of the following problem 
\begin{equation}\label{ep2}
	\begin{cases}
		\min\limits_{X\in\mathbb{R}^{(N-1)\times (N-1)}} \Psi(\tilde{B}\tilde{V}X)\\
		{\rm s.t.}\ \ X^\top X=I_{N-1}.\end{cases}
\end{equation}
Obviously, problem $\cref{ep1}$ reduces to problem $\cref{ep2}$ when $f=\Psi, h=0$ and $B=\tilde{B}\tilde{V}.$
\subsection{Existing methods}
\label{sec:existing} 
There are many efficient algorithms for smooth optimization over the Stiefel manifold, such as gradient-based algorithms \cite{abrudan2008steepest, boumal2019global, manton2002optimization, nishimori2005learning}, conjugate gradient algorithms
\cite{abrudan2009conjugate, edelman1998geometry}, geodesic methods \cite{abrudan2008steepest,absil2009optimization,edelman1998geometry}, projection methods \cite{absil2009optimization,absil2012projection,manton2002optimization}, nonretraction algorithms \cite{gao2018new,gao2019parallelizable,wang2020multipliers,xiao2020class,xiao2021exact}. However, these methods are not suitable for the nonsmooth case. In this subsection, we briefly review three categories of algorithms for nonsmooth optimization over the Stiefel manifold, namely subgradient, splitting and alternating, and proximal point methods.\par
\textbf{Subgradient methods.}
The convergence of the Riemannian subgradient type method are studied in \cite{bento2017iteration,ferreira1998subgradient,ferreira2019iteration,zhang2016first} when the objective function is geodesically convex on the Riemannian manifold. Due to the availability of a geodesic version of the convex subgradient inequality, the general analysis of convex optimization in Euclidean space can be generalized to geodesic convex optimization on Riemannian manifolds. In particular, an asymptotic convergence result was established for the first time in \cite{ferreira1998subgradient}, while \cite{bento2017iteration,ferreira2019iteration} give the global rate of convergence, for the Riemannian subgradient method. In \cite{zhang2016first}, Zhang and Sra consider the case where the objective function is strongly geodesically convex on the Riemannian manifold and show that the rate of the Riemannian projected subgradient method can be improved. However, these results do not apply to problem $\cref{ep1}.$ This is because every continuous function that is geodesically convex over a compact Riemannian manifold is a constant \cite{bishop1969manifolds,yau1974non}. In \cite{hosseini2018line}, based on the descent direction of $\varepsilon$-subgradient and the Wolfe condition, Hosseini et al. propose a nonsmooth Riemannian manifold line search algorithm and expand the classical BFGS algorithm to nonsmooth functions over Riemannian manifolds. In \cite{hosseini2017riemannian}, Hosseini and Uschmajew present a Riemann gradient sampling algorithm. Roughly speaking, all the methods studied in \cite{hosseini2018line,hosseini2017riemannian} make use of subgradient information to construct a quadratic program to find a descent direction. Since the number of subgradient required is usually larger than the dimension of $\mathcal{S}_{n,p},$ the quadratic program can be difficult to solve for high dimensional problems on the Stiefel manifold. In \cite{li2021weakly}, Li et al. propose several Riemannian subgradient type methods, namely Riemannian subgradient, incremental subgradient and stochastic subgradient methods for minimizing weakly convex functions on Stiefel manifolds. They establish the Riemannian subgradient inequality on the Stiefel manifold, and then use this inequality to estimate the iteration complexity and local convergence rate of these methods.\par
\textbf{Splitting and alternating methods.} 
Clearly, the main difficulty in solving problem $\cref{ep1}$ is that the objective function has a nonsmooth term and the constraint is nonconvex. If there is only one of them, the problem is relatively easier to solve. Hence, a natural choice for solving problem $\cref{ep1}$ is splitting and using the alternating direction method of multipliers (ADMM) \cite{boyd2011distributed} or augmented Lagrangian method (ALM). In \cite{lai2014splitting}, Lai and Osher propose the split orthogonal constraint method (SOC), which first introduces an auxiliary variable to split the objective function and orthogonal constraint, and then applies ADMM solving the equivalent split problem. In \cite{kovnatsky2016madmm}, Kovnatsky et al. propose the manifold alternating direction method of multipliers (MADMM), which is an extension of the classic ADMM method for nonsmooth optimization problem over manifold constraints. However, to the best of our knowledge, the convergence guarantees for SOC and MADMM are still lacking in the literature. Although some recent works have analyzed the convergence of ADMM for nonconvex problems \cite{wang2019global,zhang2020primal}, their results require further assumptions which are not satisfied by problem $\cref{ep1}.$ Recently, in \cite{chen2016augmented}, Chen et al. present the proximal alternating minimized augmented Lagrangian method (PAMAL) based on the ALM. Unlike SOC, PAMAL applies the ALM framework to splitting problem and the resulting subproblems are solved by block coordinate descent (BCD) method. As shown in \cite{chen2020proximal}, the numerical performance of the PAMAL is very sensitive to the choice of penalty and other algorithm parameters.\par
\textbf{Proximal point methods.} 
In \cite{chen2020proximal}, Chen et al. propose the manifold proximal gradient method (ManPG) and its speed-up version ManPG-Ada. At each iteration, it computes the proximal mapping restricted to the tangent space at the current iterate of the Stiefel manifold, which is a nonsmooth convex optimization problem. As shown in \cite{chen2020proximal}, both ManPG and ManPG-Ada perform numerically better than SOC and PAMAL methods. Recently, Huang and Wei develop an accelerated algorithm (AManPG) by incorporating some extrapolation techniques into ManPG in \cite{huang2019extension}. In \cite{huang2022riemannian}, they also propose the modified Riemannian proximal gradient method (RPG) and its accelerated version (ARPG), of which the proximal mapping subproblems are nonsmooth and nonconvex. It is worth noting that all the methods above require to solve a nonsmooth optimization problem at each iteration that has no closed-form expression. Hence, the efficiency of these methods are highly affected by the computational cost of solving the involved subproblems. In particular, the subproblems in RPG and ARPG are usually of higher computational cost then those of ManPG due to their nonsmoothness and nonconvexity. As shown in \cite{huang2022riemannian}, RPG and ARPG are generally slower than AManPG and ManPG-Ada for nonsmooth optimization over the Stiefel manifold.
\subsection{Our contributions}
\label{sec:our}
In this paper, we develop numerical algorithms for solving problem $\cref{ep1}$ by using the technique of smoothing approximation methods. Smoothing approximation methods, which approximate the original nonsmooth problem by a sequence of smooth problems, has been studied for decades and developed rich theories and powerful algorithms, see \cite{bian2020smoothing,chen1996class,chen2012smoothing,fukushima1998globally,hintermuller2013nonconvex,kreimer1992nondifferentiable,nesterov2005smooth,nesterov2007smoothing} for example. Specially, in \cite{bian2020smoothing}, the authors formally present several important requirements on smooth approximation functions which are crucial for the convergence analysis of corresponding smoothing algorithms. In order to solve problem $\cref{ep1},$ we use the Moreau envelope \cite{bauschke2011convex,moreau1965proximite,rockafellar2009variational} of the function $f$ as the smooth approximation function, and prove that for a general convex and Lipschitz continuous function, its Moreau envelope satisfies all the requirements on a smoothing function mentioned in \cite{bian2020smoothing}. To the best of our knowledge, our work is the first to establish the function value and gradient consistency properties of the Moreau envelope, which play important roles in the convergence analysis of smooth approximation methods. Then we use three different existing algorithms, including gradient projection/reflection with correction (GPC/GRC) algorithms \cite{gao2018new,wang2020multipliers} and the Riemannian gradient descent algorithm (RGD) \cite{absil2009optimization} to tackle the resulting smooth optimization problem respectively. To speed up the convergence, we also incorporate a monotone line-search scheme into GPC and GRC. Moreover, the smoothing parameter is properly updated in our smoothing methods to guarantee convergence of the proposed algorithms to a stationary point of problem $\cref{ep1}.$ Compared with the aforementioned three categories of algorithms, the proposed algorithms are more suitable for solving problem $\cref{ep1},$ especially when the proximal mapping of $f$ rather than $f\circ B$ has closed form expression, e.g., GFB problem. First, despite their simplicity, subgradient methods usually need a good initial guess which is not easy to obtain in practice. Second, splitting and alternating methods have no rigorous convergence guarantees when applied to problem $\cref{ep1}.$ Third, in this case ManPG type proximal point methods have to use other nonsmooth convex optimization solver instead of the semi-smooth Newton method for solving the subproblems involved at each iteration, which may reduce the computational efficiency. By contrast, the proposed algorithms do not need to solve expensive subproblems and thus have relatively low computational complexity at each iteration. Besides, they are guaranteed to globally converge to a stationary point of problem $\cref{ep1}.$
\subsection{Notations}
\label{sec:notation}
Let $\langle X,Y\rangle={\rm tr}(X^{T}Y)$ denote the Euclidean inner product of two matrices $X,Y\in\mathbb{R}^{n\times p},$ where ${\rm tr}(A)$ represents the trace of matrix $A\in\mathbb{R}^{p\times p}.$ We use $\|\cdot\|_F$ and $\|\cdot\|_2$ to denote the Frobenius norm and 2-norm respectively. Let $\mathcal{B}(C,r)=\{X\in\mathbb{R}^{n\times p}:\|X-C\|_F\leq r\}$ denote the ball with center $C$ and radius $r.$ For a smooth function $\phi,$ $\nabla \phi$ represents its Euclidean gradient.\par
\subsection{Organization}
\label{sec:organization}
The rest of this article is organized as follows. In \Cref{sec:new}, we introduce some preliminaries on manifold optimization. We use the Moreau envelope as the smooth approximation function and develop three smoothing algorithms for solving problem $\cref{ep1}$ in \Cref{sec:nu}. In \Cref{sec:red}, we establish the grobal convergence of the proposed algorithms. Numerical results of the proposed algorithms on solving GFB problems are reported in \Cref{sec:num}. Finally, we conclude this paper in \Cref{sec:clu}.
\section{Preliminaries on manifold optimization}
\label{sec:new}
In this section, we introduce some preliminaries on manifold optimization. It is well known that the tangent space of a point $X\in\mathcal{S}_{n,p}$ is $\{Y\in\mathbb{R}^{n\times p}: Y^\top X+X^\top Y=0_{p\times p}\},$ which is denoted by $\mathcal{T}_{X}\mathcal{S}_{n,p}.$ According to \cite{absil2009optimization}, the projection of $Y$ onto $\mathcal{T}_{X}\mathcal{S}_{n,p}$ is given by
\begin{equation}\label{proj}
{\rm Proj}_{\mathcal{T}_{X}\mathcal{S}_{n,p}}(Y)=(I_n-XX^\top)Y+\frac{1}{2}X(X^\top Y-Y^\top X).
\end{equation}
A function $\varphi$ is said to be locally Lipschitz continuous in $\mathbb{R}^{n\times p}$ if for any $X\in\mathbb{R}^{n\times p}$ it is Lipschitz continuous within the neighborhood of $X.$ Note that if $\varphi$ is locally Lipschitz continuous in $\mathbb{R}^{n\times p},$ then it is also locally Lipschitz continuous when restricted to the embedding submanifold $\mathcal{S}_{n,p}.$ The Clark subdifferential of a nonsmooth function is defined as follows:
\begin{definition}(${\rm generalized\ Clarke\ subdifferential}$ \cite{clarke1990optimization,rockafellar2009variational})
For a locally Lipschitz continuous function $\varphi$ on $\mathbb{R}^{n\times p},$ the generalized directional derivative of $\varphi$ with respect to $X\in\mathbb{R}^{n\times p}$ in the direction $D$ is defined as
\begin{displaymath}
	\varphi^{\circ}(X,D):=\lim\limits_{Y\to X}\sup\limits_{t\downarrow 0}\frac{\varphi(Y+tD)-\varphi(Y)}{t}.
\end{displaymath}	
The Clark subdifferential (subdifferential for short) of $\varphi$ at $X\in\mathbb{R}^{n\times p}$ is defined by
\begin{displaymath}
\partial\varphi(X):=\{\varsigma\in\mathbb{R}^{n\times p}: \langle \varsigma,D\rangle\leq \varphi^{\circ}(X,D), \forall D\in\mathbb{R}^{n\times p}\}.
\end{displaymath}	
\end{definition}\par
The set $\partial\varphi(X)$ is closed and convex. If $\varphi$ is continuously differentiable at $X,$ then $\partial\varphi(X)=\{\nabla\varphi(X)\}.$ Furthermore, if $\varphi$ is convex, then $\partial\varphi(X)$ reduces to the traditional subdifferential in convex analysis, that is,
\begin{displaymath}
\partial\varphi(X):=\{Y\in\mathbb{R}^{n\times p}: \varphi(Z)-\varphi(X)-\langle Y,Z-X\rangle\geq 0, \forall Z\in\mathbb{R}^{n\times p}\}.
\end{displaymath}
In particular, for the objective function $F$ in problem $\cref{ep1},$ we have that
\begin{equation}\label{epf}
\partial F(X)=\partial(f\circ B)(X)+\nabla h(X)=B^\top\partial f(BX)+\nabla h(X).
\end{equation}
According to \cite[Theorem 4.1 and 5.1]{yang2014optimality}, we can characterize the first-order optimality of problem $\cref{ep1}$ and define its first-order stationary point as follows.
\begin{definition}\label{df1}
A point $X\in\mathcal{S}_{n,p}$ is called a first-order stationary point of problem $\cref{ep1}$ if it satisfies the first-order optimality condition $0\in{\rm Proj}_{\mathcal{T}_{X}\mathcal{S}_{n,p}}(\partial F(X)).$
\end{definition}\par
Let $X\in\mathcal{S}_{n,p},$ by \cref{proj}, one can easily check that for any $\xi\in\mathbb{R}^{n\times p},$
\begin{displaymath}
\|{\rm Proj}_{\mathcal{T}_{X}\mathcal{S}_{n,p}}(\xi)\|_F^2=\|(I_n-XX^\top)\xi\|_F^2+\frac{1}{4}\|X^\top\xi-\xi^\top X\|_F^2,
\end{displaymath}
which together with \Cref{df1} immediately leads to an equivalent description of the first-order stationary point of problem $\cref{ep1}.$
\begin{proposition}\label{xxip}
A point $X\in\mathcal{S}_{n,p}$ is a first-order stationary point of problem $\cref{ep1}$ if and only if there exists $\xi\in\partial F(X)$ such that
\begin{equation}\label{xxi}
(I_n-XX^\top)\xi=0\ \ \text{and}\ \ X^\top\xi-\xi^\top X=0.	
\end{equation}
\end{proposition}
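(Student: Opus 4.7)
The plan is to deduce the proposition directly from Definition 2.1 together with the Frobenius-norm decomposition of the tangent projection that is displayed just before the statement; these two ingredients already pin the equivalence down.

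First I would unpack Definition 2.1: $X\in\mathcal{S}_{n,p}$ is a first-order stationary point exactly when $0\in{\rm Proj}_{\mathcal{T}_{X}\mathcal{S}_{n,p}}(\partial F(X))$, where the projection is applied set-wise. Since ${\rm Proj}_{\mathcal{T}_{X}\mathcal{S}_{n,p}}$ is a single-valued linear map, $0$ belongs to the image of the convex set $\partial F(X)$ if and only if there exists some $\xi\in\partial F(X)$ with ${\rm Proj}_{\mathcal{T}_{X}\mathcal{S}_{n,p}}(\xi)=0$. The whole task therefore reduces to identifying the kernel of the tangent projection (on arguments coming from $\partial F(X)$) with the system \cref{xxi}.

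Next I would invoke the identity $\|{\rm Proj}_{\mathcal{T}_{X}\mathcal{S}_{n,p}}(\xi)\|_F^2=\|(I_n-XX^\top)\xi\|_F^2+\tfrac14\|X^\top\xi-\xi^\top X\|_F^2$ recorded in the excerpt immediately before the proposition. This identity itself follows from \cref{proj} after observing that $(I_n-XX^\top)X=0$ on $\mathcal{S}_{n,p}$, which makes the two summands of \cref{proj} mutually orthogonal in the Frobenius inner product (and turns the second summand's squared norm into $\tfrac14\|X^\top\xi-\xi^\top X\|_F^2$ via $X^\top X=I_p$). Since the right-hand side is a sum of two squared Frobenius norms, it vanishes if and only if each summand vanishes, i.e., if and only if $(I_n-XX^\top)\xi=0$ and $X^\top\xi-\xi^\top X=0$. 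Combining this with the first step yields both directions of the claimed equivalence.

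There is no substantial obstacle here: the proposition is really an algebraic corollary of \cref{proj}, packaged with the set-valued reading of the inclusion in Definition 2.1. The one point I would take care to state explicitly is the reverse direction, where an existing $\xi$ satisfying \cref{xxi} is plugged back into \cref{proj} to produce ${\rm Proj}_{\mathcal{T}_{X}\mathcal{S}_{n,p}}(\xi)=0$, thereby placing $0$ in ${\rm Proj}_{\mathcal{T}_{X}\mathcal{S}_{n,p}}(\partial F(X))$ and closing the loop.
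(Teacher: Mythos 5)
Your proposal is correct and follows the same route as the paper: the paper derives \cref{xxip} directly from \Cref{df1} together with the displayed Pythagorean identity for $\|{\rm Proj}_{\mathcal{T}_{X}\mathcal{S}_{n,p}}(\xi)\|_F^2$, treating the equivalence as immediate since a sum of two squared norms vanishes iff each summand does. Your additional justification of that identity (orthogonality of the two summands of \cref{proj} via $(I_n-XX^\top)X=0$ and $X^\top X=I_p$) is accurate and simply makes explicit what the paper leaves to the reader.
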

We also need the concept of a retraction.
\begin{definition}(\cite[Definition 3.4]{chen2020proximal})
A retraction on a differential manifold $\mathcal{M}$ is a smooth mapping ${\rm Retr}$ from the tangent bundle $\mathcal{TM}$ onto $\mathcal{M}$ satisfying the following two conditions (here ${\rm Retr}_X$ denotes the restriction of ${\rm Retr}$ onto $\mathcal{T}_{X}\mathcal{M}$):\par
(i) ${\rm Retr}_X(0)=X, \forall X\in\mathcal{M},$ where 0 denotes the zero element of $\mathcal{T}_{X}\mathcal{M}.$\par
(ii) For any $X\in\mathcal{M},$ it holds that
\begin{displaymath}
\lim\limits_{\xi\in\mathcal{T}_{X}\mathcal{M}\to 0}\frac{\|{\rm Retr}_X(\xi)-(X+\xi)\|_F}{\|\xi\|_F}=0.
\end{displaymath}
\end{definition}\par
For the Stiefel manifold, common retractions include the exponential map \cite{edelman1998geometry} and those based on the QR decomposition \cite{absil2009optimization}, Cayley transformation \cite{wen2013feasible}, and polar decomposition \cite{absil2009optimization}. According to \cite{chen2020proximal}, the retraction based on polar decomposition has the lowest cost in terms of computational complexity. Therefore, we will focus on the retraction based on polar decomposition, which is given by
\begin{equation}\label{e39}
	{\rm Retr}_X(\xi)=(X+\xi)(I_p+\xi^\top \xi)^{-\frac{1}{2}}.
\end{equation}\par
\section{Smoothing algorithms for solving problem $\cref{ep1}$}
\label{sec:nu}
In this section, we propose three smoothing algorithms for finding a first-order stationary point of problem $\cref{ep1}.$ In the first subsection, we introduce the Moreau envelope and show that it owns all the nice properties of a smooth approximation function defined in \cite{bian2020smoothing}. In the second subsection, we combine the Moreau envelope with GPC/GRC \cite{gao2018new,wang2020multipliers} and RGD \cite{absil2009optimization} to develop our algorithms for solving problem $\cref{ep1}.$ 
\subsection{Smooth approximation functions and the Moreau envelope}
\label{sec:smoothing}
In \cite[Definition 3.1]{bian2020smoothing}, the authors summarize several important requirements on smoothing functions for a convex function. Based on this, we can formally define smooth approximation functions for a $nonconvex$ function, which slightly modifies the convexity item of the requirements. 
\begin{definition}\label{a2}
The function $\tilde{g}(\cdot,\cdot):\mathbb{R}^{n\times p}\times(0,+\infty)\to\mathbb{R}$ is called a smooth approximation function of a nonsmooth (possibly nonconvex) function $g:\mathbb{R}^{n\times p}\to\mathbb{R}$ if it satisfies the following conditions:\\
(i) $\lim\limits_{(Z,\mu)\to(X,0^+)}\tilde{g}(Z,\mu)=g(X),\ \ \forall X\in\mathbb{R}^{n\times p};$\\
(ii) (convexity) If $g$ is convex, then for any fixed $\mu>0,$ $\tilde{g}(Z,\mu)$ is convex with respect to $Z$ in $\mathbb{R}^{n\times p};$\\  
(iii) (Lipschitz differentiability with respect to $X$) There exists constants $L>0$ and $L^{'}\geq0$ such that for any $\mu\in(0,+\infty),$ $\nabla_{X}\tilde{g}(\cdot,\mu)$ is Lipschitz continuous with Lipschitz constant $L\mu^{-1}+L^{'};$\\
(iv) (Lipschitz continuity with respect to $\mu$) There exists a constant $\kappa>0$ such that 
\begin{displaymath}
|\tilde{g}(X,\mu_1)-\tilde{g}(X,\mu_2)|\leq \kappa|\mu_1-\mu_2|,\ \ \forall \ X\in\mathbb{R}^{n\times p}, \mu_1, \mu_2\in(0,+\infty).
\end{displaymath}
(v) (gradient consistency) $\Big\{\lim\limits_{(Z,\mu)\to(X,0^+)}\nabla_{Z}\tilde{g}(Z,\mu)\Big\}\subseteq\partial g(X),$ $\forall X\in\mathbb{R}^{n\times p}.$ 
\end{definition}\par
Many existing results in \cite{chen1996class,chen2012smoothing,fukushima2002smoothing,hintermuller2013nonconvex,kreimer1992nondifferentiable,nesterov2005smooth,rockafellar2009variational} can help us to construct smoothing functions that satisfy the conditions in \Cref{a2}. In particular, we shall use the Moreau envelope as the smoothing function of $f$ in problem $\cref{ep1}.$ For $\mu>0$ and a convex function $\varphi$ on $\mathbb{R}^{q\times p},$ the Moreau envelope of $\varphi$ with the smoothing parameter $\mu$ at $Z\in\mathbb{R}^{q\times p}$ is defined as
\begin{equation}\label{envm}
	{\rm env}_{\varphi}(Z,\mu):=\min\limits_{Y}\bigg\{\varphi(Y)+\frac{1}{2\mu}\|Y-Z\|^2_F: Y\in\mathbb{R}^{q\times p}\bigg\}.	
\end{equation}
We also define ${\rm env}_{\varphi}(\cdot,0):=\varphi(\cdot).$ The Moreau envelope is closely related to the concept of proximal operator. The proximal operator of a convex function $\varphi$ at $Z\in\mathbb{R}^{q\times p}$ is defined by
\begin{equation}\label{prox}
	{\rm prox}_{\mu \varphi}(Z)=\arg\min\limits_{Y}\bigg\{\varphi(Y)+\frac{1}{2\mu}\|Y-Z\|^2_F: Y\in\mathbb{R}^{q\times p}\bigg\}.
\end{equation}
Next, we verify that the Moreau envelope of a convex and Lipschitz continuous function owns all the properties in \Cref{a2}. For a proper, closed and convex function $\varphi$ on $\mathbb{R}^{q\times p},$ the convexity and smoothness of ${\rm env}_{\varphi}(Z,\mu)$ with $\mu>0$ follow directly from  \cite[Proposition 12.11]{bauschke2011convex} and \cite[Proposition 12.29]{bauschke2011convex} respectively. In particular, the gradient of ${\rm env}_{\varphi}(\cdot,\mu)$ at $Z\in\mathbb{R}^{q\times p}$ given by
\begin{equation}\label{eng}
	\nabla_{Z}{\rm env}_{\varphi}(Z,\mu)=\frac{1}{\mu}(I_q-{\rm prox}_{\mu\varphi})(Z)
\end{equation}
is Lipschitz continuous with Lipschitz constant $\frac{1}{\mu}$ due to the fact that $I_q-{\rm prox}_{\mu\varphi}$ is Lipschitz continuous with Lipschitz constant 1, where $I_q$ stands for the identity operator. Moreover, by \cite[Lemma 2.2]{orabona2012prisma} and further assuming $\varphi$ is Lipschitz continuous with Lipschitz constant $\rho,$ we have that
\begin{displaymath}
{\rm env}_{\varphi}(Z,\mu_1)\leq{\rm env}_{\varphi}(Z,\mu_2)\leq{\rm env}_{\varphi}(Z,\mu_1)+\frac{1}{2}(\mu_1-\mu_2)\rho^2,
\end{displaymath}
for all $\mu_1>\mu_2\geq 0$ and $Z\in\mathbb{R}^{q\times p}.$ This indicates the Lipschitz continuity of ${\rm env}_{\varphi}(\cdot,\mu)$ with respect to $\mu\in[0,\infty).$ Finally, it remains to show that ${\rm env}_{\varphi}(\cdot,\mu)$ satisfies items (i) and (v) of \Cref{a2}, which are established in the following proposition.
\begin{proposition}\label{env}
Let $\varphi:\mathbb{R}^{q\times p}\to\mathbb{R}$ be a convex and Lipschitz continuous function with Lipschitz constant $\rho>0,$ then the following statements hold:\par
(i) $\lim\limits_{Z\to Y,\mu\downarrow 0}{\rm env}_{\varphi}(Z,\mu)=\varphi(Y),\ \forall Y\in\mathbb{R}^{q\times p}.$\par
(ii) $\|\nabla_{Z}{\rm env}_{\varphi}(Z,\mu)\|_F\leq2\rho,\ \forall(Z,\mu)\in\mathbb{R}^{q\times p}\times(0,+\infty).$\par
(iii) $\Big\{\lim\limits_{Z\to Y,\mu\downarrow 0}\nabla_{Z}{\rm env}_{\varphi}(Z,\mu)\Big\}\subseteq\partial \varphi(Y), \forall Y\in\mathbb{R}^{q\times p}.$
\end{proposition}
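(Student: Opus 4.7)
The plan is to verify the three items in order, leveraging the facts already established just before the statement: the bracketing estimate ${\rm env}_\varphi(Z,\mu_1)\leq{\rm env}_\varphi(Z,\mu_2)\leq{\rm env}_\varphi(Z,\mu_1)+\tfrac{1}{2}(\mu_1-\mu_2)\rho^2$ for $\mu_1>\mu_2\geq 0$, the gradient formula $\nabla_{Z}{\rm env}_{\varphi}(Z,\mu)=\tfrac{1}{\mu}(Z-{\rm prox}_{\mu\varphi}(Z))$ from \cref{eng}, and the convention ${\rm env}_\varphi(Z,0)=\varphi(Z)$. Throughout I write $Y^{\ast}_{\mu,Z}:={\rm prox}_{\mu\varphi}(Z)$ for brevity.

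For (i), I would specialize the bracketing inequality to $\mu_2=0$, which immediately yields $|{\rm env}_\varphi(Z,\mu)-\varphi(Z)|\leq\tfrac{1}{2}\mu\rho^2$. Combining this with the $\rho$-Lipschitz continuity of $\varphi$ through the triangle inequality gives $|{\rm env}_\varphi(Z,\mu)-\varphi(Y)|\leq\tfrac{1}{2}\mu\rho^2+\rho\|Z-Y\|_F$, and passing to the limit $(Z,\mu)\to(Y,0^+)$ finishes the argument.

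For (ii), the first-order optimality condition for the strongly convex minimization defining the prox in \cref{envm} gives $\tfrac{Z-Y^{\ast}_{\mu,Z}}{\mu}\in\partial\varphi(Y^{\ast}_{\mu,Z})$. Since $\varphi$ is convex and $\rho$-Lipschitz, every element of its subdifferential has Frobenius norm at most $\rho$ (a one-line argument: test the subgradient inequality against a small multiple of the subgradient itself and invoke Lipschitzness). Combining this observation with \cref{eng} yields $\|\nabla_{Z}{\rm env}_\varphi(Z,\mu)\|_F\leq\rho$, which is in fact sharper than the stated bound $2\rho$.

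For (iii), pick any $\xi$ in the limit set, realized by a sequence $(Z_n,\mu_n)\to(Y,0^+)$ with $\xi_n:=\nabla_{Z}{\rm env}_\varphi(Z_n,\mu_n)\to\xi$. Setting $Y_n:=Y^{\ast}_{\mu_n,Z_n}$, the defining minimization of the prox gives $\tfrac{1}{2\mu_n}\|Y_n-Z_n\|_F^{2}\leq\varphi(Z_n)-\varphi(Y_n)\leq\rho\|Y_n-Z_n\|_F$, from which $\|Y_n-Z_n\|_F\leq 2\mu_n\rho$ and hence $Y_n\to Y$. Since $\xi_n\in\partial\varphi(Y_n)$ by the optimality condition from (ii), the convex subgradient inequality $\varphi(W)\geq\varphi(Y_n)+\langle\xi_n,W-Y_n\rangle$ holds for every $W\in\mathbb{R}^{q\times p}$; passing to the limit and using continuity of $\varphi$ delivers $\varphi(W)\geq\varphi(Y)+\langle\xi,W-Y\rangle$, i.e., $\xi\in\partial\varphi(Y)$. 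The only genuine obstacle is establishing $Y_n\to Y$ so that the closed-graph (outer semicontinuity) property of $\partial\varphi$ can be invoked, and the $\rho$-Lipschitzness of $\varphi$ is precisely what supplies the needed $\mathcal{O}(\mu)$ proximal displacement bound.
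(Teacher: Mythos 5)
Your proof is correct, and while item (i) coincides with the paper's argument, items (ii) and (iii) take a genuinely different route. For (ii), the paper derives the bound $\|Z-{\rm prox}_{\mu\varphi}(Z)\|_F\leq 2\mu\rho$ directly from the inequality ${\rm env}_{\varphi}(Z,\mu)\leq\varphi(Z)$ together with Lipschitz continuity, and then invokes \cref{eng}; you instead use the prox optimality condition $\tfrac{1}{\mu}(Z-{\rm prox}_{\mu\varphi}(Z))\in\partial\varphi({\rm prox}_{\mu\varphi}(Z))$ plus the standard fact that subgradients of a $\rho$-Lipschitz convex function are norm-bounded by $\rho$, which yields the sharper constant $\rho$ in place of $2\rho$ (the weaker bound is all the paper needs downstream, so both suffice). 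For (iii), the paper passes to the dual: it rewrites $Y_k=\nabla_Z{\rm env}_{\varphi}(Z_k,\mu_k)$ via the Moreau decomposition as ${\rm prox}_{\varphi^{\ast}/\mu_k}(Z_k/\mu_k)$, takes limits in the inclusion $Z_k-\mu_kY_k\in\partial\varphi^{\ast}(Y_k)$ using outer semicontinuity of $\partial\varphi^{\ast}$, and then inverts with $\partial\varphi^{\ast}=(\partial\varphi)^{-1}$. You stay entirely on the primal side: the $O(\mu)$ proximal displacement bound gives ${\rm prox}_{\mu_n\varphi}(Z_n)\to Y$, the gradient is a subgradient of $\varphi$ at the prox point, and you pass to the limit directly in the convex subgradient inequality. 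Your version is more elementary and self-contained --- it avoids the conjugate function and the two cited propositions from Bauschke--Combettes entirely --- and it makes explicit the quantitative displacement estimate that the paper itself uses in item (ii); the paper's conjugate-based route is shorter on the page only because it outsources the closed-graph and inversion facts to references.
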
 
\begin{proof}
We first prove Item (i). For any $Y, Z\in\mathbb{R}^{q\times p}$ and $\mu>0,$ and from \cite[Lemma 2.2]{orabona2012prisma}, we have	
\begin{align*}
&\quad\big|{\rm env}_{\varphi}(Z,\mu)-\varphi(Y)\big|\\
&=\big|{\rm env}_{\varphi}(Z,\mu)-\varphi(Z)+\varphi(Z)-\varphi(Y)\big|\\
&\leq \big|{\rm env}_{\varphi}(Z,\mu)-\varphi(Z)\big|+|\varphi(Z)-\varphi(Y)|\\
&\leq \frac{1}{2}\mu\rho^2+\rho\|Z-Y\|_F,
\end{align*}
which yields $\lim\limits_{Z\to Y,\mu\downarrow 0}{\rm env}_{\varphi}(Z,\mu)=\varphi(Y).$\par 
Next we prove Item (ii). By the definition of Moreau envelope \cref{envm} and proximal operator \cref{prox}, for all $Z\in\mathbb{R}^{q\times p}$ and $\mu>0,$ we have that
\begin{equation}\label{envm1}
{\rm env}_{\varphi}(Z,\mu)=\varphi({\rm prox}_{\mu\varphi}(Z))+\frac{1}{2\mu}\|Z-{\rm prox}_{\mu\varphi}(Z)\|_F^2\leq\varphi(Z).
\end{equation}
Since $\varphi$ is Lipschitz continuous with Lipschitz constant $\rho>0,$ we obtain further from \cref{envm1} that
\begin{align*}
\frac{1}{2\mu}\|Z-{\rm prox}_{\mu\varphi}(Z)\|_F^2&\leq \varphi(Z)-\varphi({\rm prox}_{\mu\varphi}(Z))\\
&\leq \rho\|Z-{\rm prox}_{\mu\varphi}(Z)\|_F.
\end{align*}
This together with \cref{eng} indicates that 
\begin{displaymath}
\|\nabla_{Z}{\rm env}_{\varphi}(Z,\mu)\|_F\leq 2\rho,\ \text{for all}\  Z\in\mathbb{R}^{q\times p},\mu>0.
\end{displaymath}
\par
Finally we prove Item (iii). Let $Y_{\ast}\in\Big\{\lim\limits_{Z\to Y,\mu\downarrow 0}\nabla_{Z}{\rm env}_{\varphi}(Z,\mu)\Big\},$ then there exist sequences $\{Z_k\}\to Z_{\ast}$ and $\{\mu_k\}\downarrow 0$ such that sequence $\{Y_k:=\nabla_{Z}{\rm env}_{\varphi}(Z_k,\mu_k)\}$ converges to $Y_{\ast}.$ From Moreau decomposition \cite[Theorem 14.3]{bauschke2011convex}, we have
\begin{equation}\label{ey}
Y_k={\rm prox}_{\varphi^{\ast}/\mu_k}(Z_k/\mu_k),
\end{equation}
where $\varphi^{\ast}$ is the convex conjugate of $\varphi.$ Invoking \cite[Proposition 16.34]{bauschke2011convex}, we know \cref{ey} holds if and only if the following equivalent statement holds:
\begin{equation}\label{vpar}
Z_k-\mu_kY_k\in\partial \varphi^{\ast}(Y_k).	
\end{equation}
Then from \cite[Proposition 2.1.5]{clarke1990optimization}, we obtain by passing to the limit in \cref{vpar} that  $Z_{\ast}\in\partial \varphi^{\ast}(Y_{\ast}).$ According to \cite[Proposition 16.9]{bauschke2011convex}, we can obtain $ Y_{\ast}\in\partial \varphi(Z_{\ast}).$ This completes the proof.	
\end{proof}\par
Let $\tilde{F}(\cdot,\cdot):\mathbb{R}^{n\times p}\times(0,+\infty)\to\mathbb{R}$ be defined by 
\begin{displaymath}
\tilde{F}(X,\mu):={\rm env}_{f}(BX,\mu)+h(X),\ \ \ \forall (X,\mu)\in\mathbb{R}^{n\times p}\times(0,+\infty),
\end{displaymath}
where $f$ and $B$ are given in problem $\cref{ep1}.$ Then we immediately have that
\begin{equation}\label{nfx}
\nabla_{X}\tilde{F}(X,\mu)=B^\top\nabla_{Z}{\rm env}_{f}(Z,\mu)|_{Z=BX}+\nabla h(X).
\end{equation}
Using this and invoking the fact that ${\rm env}_{f}(Z,\mu)$ is a smooth approximation function of $f,$ one can easily check that $\tilde{F}(X,\mu)$ is a smooth approximation function of $F.$ In particular, $\nabla_{X}\tilde{F}(X,\mu)$ is Lipschitz continuous with respect to $X$ and the Lipschitz constant is $L_{\mu}=L_0/\mu+L_h,$ where $L_0=\|B\|^2_2,$ while $\tilde{F}(X,\mu)$ is Lipschitz continuous with respect to $\mu$ with Lipschitz constant $\kappa=L^2_f/2.$ Furthermore, we have the following corollary regarding $\nabla_{X}\tilde{F}(X,\mu).$
\begin{corollary}\label{mf}
Let $S\subseteq\mathbb{R}^{n\times p}$ be a bounded subset. Then it holds that
\begin{displaymath}
\|\nabla_{X}\tilde{F}(X,\mu)\|_F\leq 2L_f\|B\|_2+C_h,\ \forall(X,\mu)\in S\times(0,+\infty),
\end{displaymath}
where $C_h:=\sup\{\|\nabla h(X)\|_F: X\in S\}.$	
\end{corollary}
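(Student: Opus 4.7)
The plan is to derive the bound directly from the gradient expression \cref{nfx} by applying the triangle inequality, then controlling each of the two summands separately with tools already in hand.

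First I would write $\nabla_X\tilde F(X,\mu)=B^\top\nabla_Z{\rm env}_f(BX,\mu)+\nabla h(X)$ using \cref{nfx}, and estimate the first term by the submultiplicative inequality $\|B^\top V\|_F\le\|B\|_2\|V\|_F$. Since $f$ is assumed convex and Lipschitz continuous on $\mathbb{R}^{q\times p}$ with Lipschitz constant $L_f$, Item (ii) of \Cref{env} applies to $\varphi=f$ with $\rho=L_f$ and yields the uniform bound $\|\nabla_Z{\rm env}_f(Z,\mu)\|_F\le 2L_f$ for every $Z\in\mathbb{R}^{q\times p}$ and $\mu>0$. Substituting $Z=BX$ gives $\|B^\top\nabla_Z{\rm env}_f(BX,\mu)\|_F\le 2L_f\|B\|_2$, uniformly in $X\in\mathbb{R}^{n\times p}$ and $\mu>0$.

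For the second term, I would simply invoke the definition $C_h:=\sup\{\|\nabla h(X)\|_F:X\in S\}$, which is finite because $\nabla h$ is (globally) Lipschitz continuous and $S$ is bounded. This immediately gives $\|\nabla h(X)\|_F\le C_h$ for every $X\in S$. Combining the two estimates via the triangle inequality produces the claimed bound $\|\nabla_X\tilde F(X,\mu)\|_F\le 2L_f\|B\|_2+C_h$ for all $(X,\mu)\in S\times(0,+\infty)$.

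There is really no main obstacle: the key nontrivial input, namely the $\mu$-uniform gradient bound $\|\nabla_Z{\rm env}_f(Z,\mu)\|_F\le 2L_f$, has already been established in \Cref{env}(ii), and the rest is the triangle inequality together with boundedness of $\|\nabla h\|_F$ over $S$. The only point worth emphasizing in writing is that the first summand is bounded uniformly over all of $\mathbb{R}^{n\times p}\times(0,+\infty)$ (not just over $S$), so that boundedness of $S$ is used solely to finitize $C_h$.
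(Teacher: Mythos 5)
Your argument is correct and is exactly the route the paper takes: the paper's proof is a one-line appeal to \Cref{env}(ii), \cref{nfx}, and the continuity of $\nabla h$, which you have simply written out in full via the triangle inequality and the bound $\|B^\top V\|_F\le\|B\|_2\|V\|_F$. No discrepancies to report.
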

\begin{proof}
It is a direct consequence of \Cref{env} (ii), \cref{nfx} and the continuity of $\nabla h(X).$
\end{proof}\par
To close this subsection, we point out that for any $Y\in\mathbb{R}^{|\mathcal{E}|\times (N-1)}$ the Moreau envelope of $\Psi$ in \cref{ep} has the form of 
\begin{displaymath}
{\rm env}_{\Psi}(Y,\mu):=\sum_{k=1}^{|\mathcal{E}|}\sum_{j=1}^{N-1}\Phi(Y_{k,j},\mu),
\end{displaymath}
where
\begin{displaymath}
	\Phi(Y_{k,j},\mu)=\begin{cases}
		w_{i_k,j_k}Y_{k,j}-\frac{\mu w_{i_k,j_k}^2}{2},&\mbox{if}\ Y_{k,j}\geq\mu w_{i_k,j_k},\\
		\frac{1}{2\mu}Y_{k,j}^2,&\mbox{if}\ 0\leq Y_{k,j}<\mu w_{i_k,j_k},\\
		0,&\mbox{if}\ Y_{k,j}<0.\end{cases}
\end{displaymath}
\subsection{Numerical algorithms}
\label{sec:mn}
In this subsection, we develop three smoothing algorithms for finding a first-order stationary point of problem $\cref{ep1},$ by combing the smoothing method using Moreau envelope with GPC, GRC and RGD for solving the smooth approximation problem of problem $\cref{ep1}.$ Accordingly, the proposed smoothing algorithms are called smoothing gradient projection with correction $({\rm SGPC})$ algorithm, smoothing gradient reflection with correction $({\rm SGRC})$ algorithm, and smoothing Riemannian gradient descent (${\rm SRGD}$) algorithm, respectively. In what follows, we shall give detailed descriptions of them.\par
\textbf{SGPC/SGRC}\ \ SGPC/SGRC has three steps for each iteration: function value reduction, proximal correction and smoothing parameter updating. The only difference between SGPC and SGRC lies in the function value reduction step. Let $X_k\in\mathcal{S}_{n,p}$ be the current iterate and $\mu_k>0$ be the current smoothing parameter. In the function value reduction step, given a stepsize $\tau_k>0,$ SGPC directly projects $X_k-\tau_k\nabla_{X}\tilde{F}(X_k,\mu_k)$ onto the Stiefel manifold to obtain a feasible intermediate point $\bar{X}_k,$ i.e.,
\begin{equation}\label{e19}
\bar{X}_{k}={\rm Proj}_{\mathcal{S}_{n,p}}(X_k-\tau_k\nabla_{X}\tilde{F}(X_k,\mu_k)).
\end{equation}
For any $Y\in\mathbb{R}^{n\times p},$ the projection ${\rm Proj}_{\mathcal{S}_{n,p}}(Y)$ can be calculated by $RI_pQ^{\top},$ where $R$ and $Q$ are the left and right singular vectors of $Y,$ respectively. In contrast to SGPC, SGRC generates $\bar{X}_k$ by taking the reflection point of $X_k$ on the null space of $U_k:=X_k-\tau_k\nabla_{X}\tilde{F}(X_k,\mu_k),$ that is,
\begin{equation}\label{e11}
	\bar{X}_{k}=(-I_n+2U_k(U_k^\top U_k)^{\dagger}U_k^\top)X_k,
\end{equation}
where $(U_k^\top U_k)^{\dagger}$ denotes the pseudo-inverse of $U_k^\top U_k.$ By \cite[Lemma 3.2 and 3.3]{gao2018new}, the intermediate point $\bar{X}_k$ is guaranteed to have less function value than $X_k$ if the stepsize $\tau_k\in(0,L_{\mu_k}^{-1}).$ Hence, a large $L_{\mu_k}$ yields small stepsize $\tau_k$ and thus may render slow convergence. To remedy this issue, we incorporate a monotone line-search scheme into \cref{e19} and \cref{e11}, which can help to produce an $\bar{X}_k$ with sufficient function value reduction while maintaining an appropriate stepsize $\tau_k.$ Specially, the initial choice of $\tau_k$ is set via the following formula proposed by Barzilai and Browein \cite{barzilai1988two}, which adaptively approximates $L_{\mu_k}^{-1}$ using some local curvature information of the smooth objective, i.e.,
\begin{equation}\label{etau}
	\tau_k=\begin{cases}
		\max\Big\{\underline{\tau}_k,\min\Big\{\bar{\tau}_{k},\frac{\|X_k-X_{k-1}\|^2_F}{\mathbf{T}_k}\Big\}\Big\},&\mbox{if}\ \ \mathbf{T}_k\neq 0,\\
		\bar{\tau}_{k},&\mbox{else},\end{cases}\end{equation}
where $\underline{\tau}_k=\frac{1}{(1+\epsilon)L_{\mu_k}}, \bar{\tau}_k=c\underline{\tau}_k, c>1,$ and
$\mathbf{T}_k:=\langle X_k-X_{k-1}, \nabla_{X}\tilde{F}(X_k,\mu_k)-\nabla_{X}\tilde{F}(X_{k-1},\mu_k)\rangle.$ In the second step, SGPC/SGRC performs a proximal correction to generate a new iterate $X_{k+1}$ as follows
\begin{equation}\label{e26}
	X_{k+1}=\begin{cases}
		\bar{X}_k,&\mbox{if}\ Z_k=0,\\
		-\bar{X}_k{\rm Proj}_{\mathcal{S}_{p,p}}(Z_k),&\mbox{otherwise},
	\end{cases}
\end{equation}
where $Z_k=(\bar{X}_k)^\top\nabla_{X}\tilde{F}(\bar{X}_k,\mu_k)-\gamma I_p$ and $\gamma>0.$ In the final step, SGPC/SGRC adopts a simple criterion from \cite{bian2020smoothing} for updating the smoothing parameter $\mu_k.$ Roughly speaking, if $\tilde{F}(X_{k+1},\mu_k)+\kappa\mu_k$ is sufficiently reduced, then $\mu_k$ does not change, otherwise it will be scaled down properly for the next iteration. We summarize SGPC and SGRC in \Cref{alg1}.\par
\begin{algorithm}[htb]
	\renewcommand{\algorithmicrequire}{\textbf{Input:}}
	\renewcommand{\algorithmicensure}{\textbf{Output:}}
	\caption{SGPC and SGRC algorithms}
	\label{alg1}
	\begin{algorithmic}[1]
		\REQUIRE initial point $X_0\in\mathcal{S}_{n,p}, \mu_{-1}=\mu_0>0,\sigma\in(0,1),c>1,\alpha>0,\epsilon>0,\eta\in(0,1);$ Set $k\leftarrow 0.$
		\WHILE{certain stopping criterion is not reached}
		\STATE 2a) If $k=0,$ choose $\tau_k=1.$ Otherwise compute $\tau_k$ by \cref{etau}.\\
		2b) Compute $\bar{X}_k$ by \cref{e19}(SGPC) or \cref{e11}(SGRC), if $\bar{X}_k$ satisfies
		\begin{equation}\label{ef}
			\tilde{F}(\bar{X}_k,\mu_k)\leq \tilde{F}(X_k,\mu_k)-\frac{\epsilon L_{\mu_k}}{2}\|\bar{X}_k-X_{k}\|^2_F,
		\end{equation}\par
		 then go to 3. Otherwise set $\tau_k=\eta\tau_k,$ and go to 2b).
		\STATE Based on $\bar{X}_k,$ compute $X_{k+1}$ by \cref{e26}.
		\IF {\begin{equation}\label{mu}
				\tilde{F}(X_{k+1},\mu_k)+\kappa\mu_k-\tilde{F}(X_k,\mu_{k-1})-\kappa\mu_{k-1}\leq-\alpha\mu_k^2\end{equation}}
		\STATE $\mu_{k+1}=\mu_k,$
		\ELSE
		\STATE $\mu_{k+1}=\frac{\mu_0}{(k+1)^{\sigma}}.$
		\ENDIF
		\STATE Set $k=k+1$ and return to 2.
		\ENDWHILE
		\RETURN $X_{k}.$
	\end{algorithmic}
\end{algorithm}	
\textbf{SRGD}\ \ Let $X_k\in\mathcal{S}_{n,p}$ and $\mu_k>0$ be the current iterate and smoothing parameter, respectively. Every iteration of SRGD first applies the Riemannian gradient descent to $\tilde{F}(\cdot,\mu_k)$ with backtracking Armijo line-search, and then updates the smoothing parameter $\mu_k.$ According to \cref{proj}, the Riemannian gradient of $\tilde{F}(\cdot,\mu_k)$
at $X_k$ can be directly computed by 
\begin{equation}\label{e45}
	V_k=(I_n-X_kX_k^\top)\nabla_{X}\tilde{F}(X_k,\mu_k)+\frac{1}{2}X_k(X^\top_k\nabla_{X}\tilde{F}(X_k,\mu_k)-\nabla_{X}\tilde{F}(X_k,\mu_k)^\top X_k).
\end{equation}
Since for a stepsize $\tau_k>0,$ $X_k-\tau_k V_k$ does not necessarily lie on $\mathcal{S}_{n,p},$ we perform a retraction to force it back to $\mathcal{S}_{n,p}.$ In particular, a backtracking Armijo line-search procedure is involved to determine the stepsize $\tau_k.$ Furthermore, SRGD updates the parameter $\mu_k$ in the same way as SGPC and SGRC. We summarize it in \Cref{alg2}.\par
\begin{algorithm}[htb]
	\renewcommand{\algorithmicrequire}{\textbf{Input:}}
	\renewcommand{\algorithmicensure}{\textbf{Output:}}
	\caption{SRGD algorithm}
	\label{alg2}
	\begin{algorithmic}[1]
		\REQUIRE initial point $X_0\in\mathcal{S}_{n,p}, \mu_{-1}=\mu_0>0,\sigma\in(0,1),\alpha>0,\eta\in(0,1);$ Set $k\leftarrow 0.$
		\WHILE{certain stopping criterion is not reached}
		\STATE 2a) Based on $X_k,$ compute $V_k$ by \cref{e45}, and set $\tau_k=L_{\mu_k}^{-1}.$\\
		2b) Compute $X_{k+1}={\rm Retr}_{X_k}(-\tau_k V_k),$ if $X_{k+1}$ satisfies
		    \begin{equation}\label{emr}
				\tilde{F}({\rm Retr}_{X_k}(-\tau_k V_k),\mu_k)\leq\tilde{F}(X_k,\mu_{k})-\frac{\tau_k}{2}\|V_k\|_F^2,
			\end{equation}\par
		then go to 3. Otherwise set $\tau_k=\eta\tau_k,$ and go to 2b).
		\IF {\begin{equation}\label{emr1}
			\tilde{F}(X_{k+1},\mu_k)+\kappa\mu_k-\tilde{F}(X_k,\mu_{k-1})-\kappa\mu_{k-1}\leq-\alpha\mu_k^2\end{equation}} 
		\STATE $\mu_{k+1}=\mu_k,$ 
		\ELSE 
		\STATE $\mu_{k+1}=\frac{\mu_0}{(k+1)^{\sigma}}.$
		\ENDIF
		\STATE Set $k=k+1$ and return to 2.
		\ENDWHILE
		\RETURN $X_{k}.$
	\end{algorithmic}
\end{algorithm}

\section{Convergence analysis of the proposed smoothing algorithms}
\label{sec:red}
In this section, we conduct convergence analysis for the proposed algorithms SGPC, SGRC and SRGD. To this end, we first present some basic results which will be used in the convergence analysis of all these smoothing algorithms. Let $\mathcal{A}:=\{k\in\mathbb{N}:\mu_{k+1}\neq\mu_k\},$ 
$m_k:=\sup\{\|\nabla_{X}\tilde{F}(X,\mu_k)\|_F:X\in\mathcal{S}_{n,p}\}$ and $C^{'}_h=\sup\{\|\nabla h(X)\|_F:X\in\mathcal{S}_{n,p}\}.$
\begin{proposition}\label{mu0}
Let the sequences $\{X_k\}$ and $\{\mu_k\}$ be generated by one of the proposed algorithms. Then the following statements hold:\par
(i) $m_k\leq C_1,$ for any $k\in\mathbb{N},$ where
\begin{displaymath}
	C_1:=2L_f\|B\|_2+C^{'}_h.
\end{displaymath} \par	
(ii) there are infinite elements in $\mathcal{A}$ and $\lim\limits_{k\to\infty}\mu_k=0.$	
\end{proposition}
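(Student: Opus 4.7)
Part (i) should fall out almost immediately from \Cref{mf}. The Stiefel manifold $\mathcal{S}_{n,p}$ is compact (being closed and bounded in $\mathbb{R}^{n\times p}$, with $\|X\|_F=\sqrt{p}$ for every $X\in\mathcal{S}_{n,p}$), so it is a valid choice for the bounded set $S$ in \Cref{mf}. Since $\nabla h$ is continuous, the supremum $C_h'=\sup\{\|\nabla h(X)\|_F : X\in\mathcal{S}_{n,p}\}$ is finite. Applying \Cref{mf} then gives $\|\nabla_X\tilde F(X,\mu_k)\|_F\leq 2L_f\|B\|_2+C_h'=C_1$ uniformly in $X\in\mathcal{S}_{n,p}$, which is exactly the bound $m_k\leq C_1$.

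\textbf{Part (ii), first claim: $\mathcal{A}$ is infinite.} I would proceed by contradiction, supposing there is some $k_0$ with $\mu_k=\mu^\ast$ constant for all $k\geq k_0$. By definition of the update, this means that the sufficient-decrease test \cref{mu} (for SGPC/SGRC) or \cref{emr1} (for SRGD) holds at every iteration $k\geq k_0$. In particular, for all $k\geq k_0+1$ we have $\mu_k=\mu_{k-1}=\mu^\ast$, so the $\kappa\mu_k$ and $\kappa\mu_{k-1}$ terms cancel and the test reduces to
\begin{equation*}
\tilde F(X_{k+1},\mu^\ast)-\tilde F(X_k,\mu^\ast)\leq -\alpha(\mu^\ast)^2.
\end{equation*}
Telescoping this inequality from $k=k_0+1$ to $k=K$ yields $\tilde F(X_{K+1},\mu^\ast)\leq \tilde F(X_{k_0+1},\mu^\ast)-(K-k_0)\alpha(\mu^\ast)^2$, whose right-hand side tends to $-\infty$ as $K\to\infty$. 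On the other hand, $\{X_k\}\subset\mathcal{S}_{n,p}$ and $\tilde F(\cdot,\mu^\ast)$ is continuous on the compact set $\mathcal{S}_{n,p}$, hence bounded below there. This contradiction forces $\mathcal{A}$ to be infinite.

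\textbf{Part (ii), second claim: $\mu_k\to 0$.} I would first note the monotonicity of $\{\mu_k\}$. Whenever $k\notin\mathcal{A}$, $\mu_{k+1}=\mu_k$; whenever $k\in\mathcal{A}$, $\mu_{k+1}=\mu_0/(k+1)^\sigma$. If $k_j$ denotes the $j$-th element of $\mathcal{A}$, then on the constancy stretch $\{k_{j-1}+1,\dots,k_j\}$ the value of $\mu_k$ is $\mu_0/(k_{j-1}+1)^\sigma$, which then drops to $\mu_0/(k_j+1)^\sigma<\mu_0/(k_{j-1}+1)^\sigma$ at index $k_j+1$ (using $k_j>k_{j-1}$ and $\sigma>0$); a similar check handles the initial block up to $k_1$. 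Thus $\{\mu_k\}$ is non-increasing. Combined with the subsequential limit $\mu_{k_j+1}=\mu_0/(k_j+1)^\sigma\to 0$ (which holds because $\mathcal{A}$ is infinite so $k_j\to\infty$), this gives $\lim_{k\to\infty}\mu_k=0$.

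\textbf{Main obstacle.} The only nontrivial step is justifying the boundedness-below of $\tilde F(\cdot,\mu^\ast)$ on $\mathcal{S}_{n,p}$ used in the contradiction argument, and this is essentially free from compactness of $\mathcal{S}_{n,p}$ plus continuity of $\tilde F(\cdot,\mu^\ast)={\rm env}_f(B\,\cdot\,,\mu^\ast)+h$ (both summands are continuous, the first by the smoothness established in \Cref{sec:smoothing}). Everything else is bookkeeping on the update rule and the telescoping sum, so I do not expect any serious difficulty.
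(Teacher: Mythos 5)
Your proposal is correct and follows essentially the same route as the paper: part (i) is read off from \Cref{mf} with $S=\mathcal{S}_{n,p}$, and part (ii) is the same contradiction via telescoping the sufficient-decrease test against boundedness below on the compact Stiefel manifold (the paper phrases the lower bound as $\tilde F(X_{k+1},\mu_k)+\kappa\mu_k\geq F(X_{k+1})\geq\min_{X\in\mathcal{S}_{n,p}}F(X)$ using item (iv) of \Cref{a2}, which is equivalent to your compactness-plus-continuity argument once $\mu_k$ is eventually constant). No gaps worth noting.
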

\begin{proof}
(i)	It is a direct consequence of \Cref{mf}.\par
(ii) This item can be proved by following similar arguments to \cite[Lemma 3.2 (ii)]{bian2020smoothing}. For completeness, we give a proof here. Since $\{\mu_k\}$ is nonincreasing, to prove (ii), we suppose that $\lim\limits_{k\to\infty}\mu_k=\widehat{\mu}>0$ by contradiction. Then, $\mu_{k+1}=\frac{\mu_0}{(k+1)^{\sigma}}$ occurs finite times at most, which implies that there exists a constant $K>0$ such that $\mu_k=\widehat{\mu}$ for any $k\geq K.$ Then,
\begin{displaymath}
\widetilde{F}(X_{k+1},\mu_k)+\kappa\mu_k-\widetilde{F}(X_k,\mu_{k-1})-\kappa\mu_{k-1}\leq-\alpha\widehat{\mu}^2,\ \forall k\geq K+1.
\end{displaymath}
We obtain from the above inequality that 
\begin{displaymath}
\lim\limits_{k\to\infty}\widetilde{F}(X_{k+1},\mu_k)+\kappa\mu_k=-\infty.
\end{displaymath}
However, according to $\{X_k\}\subseteq\mathcal{S}_{n,p}$ and the (iv) of \Cref{a2}, this conclusion contradicts
\begin{displaymath}
\widetilde{F}(X_{k+1},\mu_k)+\kappa\mu_k\geq F(X_{k+1})\geq\min\limits_{X\in\mathcal{S}_{n,p}} F(X),\ \forall k\geq K.\end{displaymath}
Thus the proposition is proved.	
\end{proof}\par
Next, we show the convergence of SGPC/SGRC and SRGD in the following two subsections respectively.
\subsection{Convergence analysis of SGPC and SGRC}
\label{mnc}
In \cite{gao2018new,wang2020multipliers}, the authors give convergence analysis of GPC and GRC under the assumption that the objective is twice differentiable and the corresponding Hessian matrix is bounded on a bounded open set containing $\mathcal{S}_{n,p}.$ By a slight modification of the proof, we find that many of their results can be generalized to the scenario where the objective is differentiable with a Lipschitz continuous gradient. Due to this and the Lipschitz differentiability of the objective function $\tilde{F}(X,\mu),$ we are able to apply these results to the function value reduction and proximal correction steps of SGPC and SGRC, which leads to the following lemma and proposition regarding the convergence of SGPC and SGRC.
\begin{lemma}\label{l2}
For any $Z\in\mathcal{B}(X-\tau\nabla_{X}\tilde{F}(X,\mu),\tau \|\nabla_{X}\tilde{F}(X,\mu)\|_F),$ $\tau\in(0,L_{\mu}^{-1}),$ it holds that
\begin{displaymath}
	\tilde{F}(X,\mu)-\tilde{F}(Z,\mu)\geq\frac{1-L_{\mu}\tau}{2\tau}\|X-Z\|^2_F.
\end{displaymath}
\end{lemma}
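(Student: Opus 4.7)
The plan is to combine the standard descent lemma for smooth functions with Lipschitz gradient with an elementary inequality extracted from the ball condition on $Z$. Since $\tilde{F}(\cdot,\mu)$ has $L_\mu$-Lipschitz gradient (established in the paragraph after \Cref{env}), the descent lemma gives
\begin{equation*}
\tilde{F}(Z,\mu) \leq \tilde{F}(X,\mu) + \langle \nabla_X\tilde{F}(X,\mu), Z-X\rangle + \frac{L_\mu}{2}\|Z-X\|_F^2,
\end{equation*}
which I would rearrange into
\begin{equation*}
\tilde{F}(X,\mu) - \tilde{F}(Z,\mu) \geq -\langle \nabla_X\tilde{F}(X,\mu), Z-X\rangle - \frac{L_\mu}{2}\|Z-X\|_F^2.
\end{equation*}

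Next, I would exploit the hypothesis $Z\in\mathcal{B}(X-\tau\nabla_X\tilde{F}(X,\mu),\,\tau\|\nabla_X\tilde{F}(X,\mu)\|_F)$. Squaring the defining inequality $\|Z-X+\tau\nabla_X\tilde{F}(X,\mu)\|_F \leq \tau\|\nabla_X\tilde{F}(X,\mu)\|_F$ and expanding yields
\begin{equation*}
\|Z-X\|_F^2 + 2\tau\langle \nabla_X\tilde{F}(X,\mu), Z-X\rangle + \tau^2\|\nabla_X\tilde{F}(X,\mu)\|_F^2 \leq \tau^2\|\nabla_X\tilde{F}(X,\mu)\|_F^2,
\end{equation*}
so the quadratic terms in the gradient cancel, leaving
\begin{equation*}
\langle \nabla_X\tilde{F}(X,\mu), Z-X\rangle \leq -\frac{1}{2\tau}\|Z-X\|_F^2.
\end{equation*}

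Substituting this bound into the rearranged descent inequality gives
\begin{equation*}
\tilde{F}(X,\mu) - \tilde{F}(Z,\mu) \geq \frac{1}{2\tau}\|Z-X\|_F^2 - \frac{L_\mu}{2}\|Z-X\|_F^2 = \frac{1-L_\mu\tau}{2\tau}\|X-Z\|_F^2,
\end{equation*}
which is the claimed inequality; note that $\tau<L_\mu^{-1}$ ensures the coefficient is strictly positive. There is no serious obstacle here: the proof is essentially one application of the descent lemma combined with one expansion of the ball condition, with the observation that the ball being centered at a gradient step of length $\tau$ and having radius exactly $\tau\|\nabla_X\tilde{F}(X,\mu)\|_F$ is precisely what is needed to generate the $-\tfrac{1}{2\tau}\|Z-X\|_F^2$ inner-product bound whose $\tfrac{1}{2\tau}$ absorbs the $\tfrac{L_\mu}{2}$ from the descent lemma.
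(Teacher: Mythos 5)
Your proof is correct. The paper does not actually write out a proof of this lemma: it defers to \cite{gao2018new,wang2020multipliers} (Lemmas 3.2 and 3.3 of the former), remarking only that the twice-differentiability/bounded-Hessian assumption used there can be relaxed to Lipschitz continuity of the gradient ``by a slight modification of the proof.'' Your argument --- the descent lemma $\tilde{F}(Z,\mu)\leq\tilde{F}(X,\mu)+\langle\nabla_X\tilde{F}(X,\mu),Z-X\rangle+\tfrac{L_\mu}{2}\|Z-X\|_F^2$ combined with expanding the ball condition to get $\langle\nabla_X\tilde{F}(X,\mu),Z-X\rangle\leq-\tfrac{1}{2\tau}\|Z-X\|_F^2$ --- is exactly that modification made explicit (the cited proofs use a second-order Taylor bound where you use the first-order descent lemma, and the handling of the ball condition is the same), so you have supplied a complete, self-contained version of the intended argument rather than a genuinely different one.
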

\begin{proposition}\label{ds}
Let $\{X_k\},\{\bar{X}_k\},\{\mu_k\}$ and $\{\tau_k\}$ be the sequences generated by ${\rm SGPC}$ or ${\rm SGRC}.$ Then for any $k\in\mathbb{N}$ the following statements hold\par
(i) \begin{displaymath}
	\|\bar{X}_k-X_k\|_F\geq \frac{1}{\tau_k^{-1}+m_k}\|(I_n-X_kX_k^\top)\nabla_{X}\tilde{F}(X_k,\mu_k)\|_F.
\end{displaymath}\par
(ii) \begin{displaymath}
	\tilde{F}(\bar{X}_{k},\mu_k)-\tilde{F}(X_{k+1},\mu_k)\geq\frac{\epsilon L_{\mu_k}}{2}\|\bar{X}_{k}-X_{k+1}\|^2_F.
\end{displaymath}\par
(iii) \begin{displaymath}
	\|\bar{X}_k-X_{k+1}\|_F\geq\frac{1}{2(2+\epsilon)L_{\mu_k}}\|X_{k+1}^\top \nabla_{X}\tilde{F}(X_{k+1},\mu_k)-\nabla_{X}\tilde{F}(X_{k+1},\mu_k)^\top X_{k+1}\|_F.
\end{displaymath}
\end{proposition}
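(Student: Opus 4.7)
The plan is to adapt the convergence analyses of GPC and GRC in \cite{gao2018new,wang2020multipliers} to the present smoothing setting. The only structural change required is that, wherever the original proofs use a second-order Taylor expansion under the bounded-Hessian assumption, we substitute the descent inequality recorded in \Cref{l2}, which holds as soon as $\nabla_X\tilde{F}(\cdot,\mu_k)$ is Lipschitz continuous with constant $L_{\mu_k}$. All three parts then follow by working directly with the explicit closed forms \cref{e19} and \cref{e11} of $\bar{X}_k$ and the correction formula \cref{e26}.

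For part (i), set $G_k := \nabla_X\tilde{F}(X_k,\mu_k)$ and $U_k := X_k - \tau_k G_k$. For SGPC, $\bar{X}_k$ is the polar factor of $U_k$, so $\bar{X}_k\bar{X}_k^\top U_k = U_k$. Multiplying this identity on the left by $(I_n - X_kX_k^\top)$ and using $(I_n - X_kX_k^\top)X_k = 0$ yields
\[
(I_n - X_kX_k^\top)(\bar{X}_k - X_k)\bar{X}_k^\top U_k = -\tau_k(I_n - X_kX_k^\top)G_k.
\]
Taking Frobenius norms, using submultiplicativity, and estimating $\|\bar{X}_k^\top U_k\|_2 \leq 1 + \tau_k m_k$ via \Cref{mu0}(i) gives exactly the stated bound. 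For SGRC, the explicit formula \cref{e11} implies that $\bar{X}_k + X_k$ lies in the column space of $U_k$, which produces an analogous identity from which the same lower bound follows.

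For part (ii), the plan is to apply \Cref{l2} at the point $\bar{X}_k$ with a suitably chosen stepsize $\tau$. A direct calculation using $X_{k+1} = -\bar{X}_k\,{\rm Proj}_{\mathcal{S}_{p,p}}(Z_k)$ and the definition of $Z_k$ shows that $X_{k+1}$ lies in the ball $\mathcal{B}(\bar{X}_k - \tau\nabla_X\tilde{F}(\bar{X}_k,\mu_k),\tau\|\nabla_X\tilde{F}(\bar{X}_k,\mu_k)\|_F)$ for some $\tau \in (0,L_{\mu_k}^{-1})$. Choosing $\tau$ so that $\frac{1 - L_{\mu_k}\tau}{2\tau} = \frac{\epsilon L_{\mu_k}}{2}$, i.e., $\tau = \frac{1}{(1+\epsilon)L_{\mu_k}}$, and invoking \Cref{l2} delivers the claimed quadratic descent.

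For part (iii), the correction step is designed precisely to eliminate the skew-symmetric obstruction to stationarity identified in \Cref{xxip}. Writing $X_{k+1} - \bar{X}_k = -\bar{X}_k({\rm Proj}_{\mathcal{S}_{p,p}}(Z_k) + I_p)$ and using the SVD of $Z_k$, one obtains a lower bound on $\|\bar{X}_k - X_{k+1}\|_F$ in terms of the skew-symmetric part of $\bar{X}_k^\top\nabla_X\tilde{F}(\bar{X}_k,\mu_k)$. The remaining task is to shift this bound from $\bar{X}_k$ to $X_{k+1}$ by means of $\|\nabla_X\tilde{F}(X_{k+1},\mu_k) - \nabla_X\tilde{F}(\bar{X}_k,\mu_k)\|_F \leq L_{\mu_k}\|X_{k+1} - \bar{X}_k\|_F$. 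This last shift is the step I expect to be the main obstacle: it introduces a self-referential term on the right-hand side whose careful rearrangement (together with $\|X_{k+1}\|_2 = 1$) is needed to produce the factor $\frac{1}{2(2+\epsilon)L_{\mu_k}}$, with the $2+\epsilon$ absorbing the Lipschitz correction and the leading $2$ coming from symmetrizing the skew part.
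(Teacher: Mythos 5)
Your proposal takes essentially the same route as the paper: the paper gives no self-contained proof of this proposition, stating only that the GPC/GRC analyses of \cite{gao2018new,wang2020multipliers} carry over "by a slight modification" once the bounded-Hessian Taylor expansion is replaced by the Lipschitz-gradient descent estimate of \Cref{l2}, which is exactly your plan. Your additional sketches (the polar-factor identity for (i), applying \Cref{l2} at $\bar{X}_k$ with $\tau=\frac{1}{(1+\epsilon)L_{\mu_k}}$ for (ii), and the SVD of $Z_k$ for (iii)) are consistent with how those cited arguments go, and in fact supply more detail than the paper itself does.
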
\par
With the help of \Cref{l2}, we next show that the backtracking line-search procedures of SGPC and SGRC are well-defined.
\begin{proposition}
For any $k\geq 0,$ the backtracking line-search of ${\rm SGPC}$ and ${\rm SGRC}$ terminate at some $\tau_k\geq\underline{\tau}_k\eta$ in at most $\lceil\log\frac{1}{c}/\log\eta\rceil+1$ steps.
\end{proposition}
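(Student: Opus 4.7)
The plan is to reduce the statement to an application of \Cref{l2}, plus a geometric check that $\bar X_k$ lies in the ball on which that lemma is valid. Concretely, the idea is: the line-search condition \cref{ef} is guaranteed to be satisfied whenever $\tau_k$ becomes small enough that \Cref{l2} supplies exactly the required descent, namely whenever $\tau_k\leq\underline{\tau}_k=\frac{1}{(1+\epsilon)L_{\mu_k}}$. Combined with the fact that the initial stepsize chosen via \cref{etau} is at most $\bar{\tau}_k=c\,\underline{\tau}_k$, each geometric reduction by $\eta$ brings us into the acceptance region after a bounded number of steps, and this bound can be counted explicitly.

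First, I would verify the geometric hypothesis of \Cref{l2}, namely that $\bar X_k\in\mathcal{B}(U_k,\tau_k\|\nabla_X\tilde F(X_k,\mu_k)\|_F)$ with $U_k=X_k-\tau_k\nabla_X\tilde F(X_k,\mu_k)$. For SGPC this is immediate: $\bar X_k=\mathrm{Proj}_{\mathcal{S}_{n,p}}(U_k)$ and $X_k\in\mathcal{S}_{n,p}$, so $\|\bar X_k-U_k\|_F\leq\|X_k-U_k\|_F=\tau_k\|\nabla_X\tilde F(X_k,\mu_k)\|_F$. For SGRC I would unfold \cref{e11}: writing $P=U_k(U_k^\top U_k)^{\dagger}U_k^\top$ for the orthogonal projector onto $\mathrm{col}(U_k)$, so $\bar X_k=(2P-I_n)X_k$, and using $(I_n-P)U_k=0$ to get $(I_n-P)X_k=\tau_k(I_n-P)\nabla_X\tilde F(X_k,\mu_k)$, a direct computation gives
\begin{equation*}
\bar X_k-U_k=\tau_k(2P-I_n)\nabla_X\tilde F(X_k,\mu_k),
\end{equation*}
whose Frobenius norm equals $\tau_k\|\nabla_X\tilde F(X_k,\mu_k)\|_F$ since $2P-I_n$ is orthogonal. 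Thus the ball condition holds in both cases.

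Next, I would apply \Cref{l2}, which yields $\tilde F(X_k,\mu_k)-\tilde F(\bar X_k,\mu_k)\geq\frac{1-L_{\mu_k}\tau_k}{2\tau_k}\|X_k-\bar X_k\|_F^2$ whenever $\tau_k\in(0,L_{\mu_k}^{-1})$. The Armijo-type condition \cref{ef} is therefore implied as soon as $\frac{1-L_{\mu_k}\tau_k}{2\tau_k}\geq\frac{\epsilon L_{\mu_k}}{2}$, i.e.\ $\tau_k\leq\underline{\tau}_k$. Consequently the backtracking loop can reject $\tau_k$ only as long as $\tau_k>\underline{\tau}_k$; the moment a rejection occurs, the next candidate equals $\eta$ times the rejected value, so every accepted stepsize satisfies $\tau_k\geq\eta\,\underline{\tau}_k$ (either no rejection happened and $\tau_k\geq\underline{\tau}_k$ by \cref{etau}, or a rejection at some $\tau>\underline{\tau}_k$ was followed by multiplying by $\eta$).

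Finally, to bound the number of steps, I would use that the initial trial satisfies $\tau_k^{(0)}\leq\bar{\tau}_k=c\,\underline{\tau}_k$ by \cref{etau}. After $j$ contractions the trial stepsize is at most $c\,\underline{\tau}_k\eta^j$, which drops to $\underline{\tau}_k$ as soon as $c\eta^j\leq 1$, i.e.\ $j\geq\log(1/c)/\log\eta$; hence $j=\lceil\log(1/c)/\log\eta\rceil$ backtracks force acceptance, giving at most $\lceil\log(1/c)/\log\eta\rceil+1$ evaluations in total. The main obstacle is really the SGRC geometric step above: the reflection formula \cref{e11} looks ungainly, and one has to recognize the projector structure and exploit $U_k=PU_k$ to reduce $\|\bar X_k-U_k\|_F$ to $\tau_k\|\nabla_X\tilde F(X_k,\mu_k)\|_F$; once that is in place the rest is bookkeeping around \Cref{l2} and the geometric schedule.
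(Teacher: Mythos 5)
Your proof is correct and follows essentially the same route as the paper: verify that $\bar X_k$ lies in the ball required by \Cref{l2}, deduce that \cref{ef} holds once $\tau_k\leq\underline{\tau}_k$, and count the geometric backtracks from $\bar\tau_k=c\,\underline{\tau}_k$. The only difference is that you spell out the parts the paper dismisses as ``easy to verify''—in particular the reflection computation showing $\|\bar X_k-U_k\|_F=\tau_k\|\nabla_X\tilde F(X_k,\mu_k)\|_F$ for SGRC via the orthogonality of $2P-I_n$—which is a welcome addition rather than a deviation.
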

\begin{proof}
First, it is easy to verify that the $\bar{X}_k$ generated by \cref{e19} or \cref{e11} satisfy $\bar{X}_k\in\mathcal{B}(X_k-\tau_k\nabla_{X}\tilde{F}(X_k,\mu_k),\tau_k \|\nabla_{X}\tilde{F}(X_k,\mu_k)\|_F)$ for any $k\geq 0.$ Invoking \Cref{l2}, we get that for $\tau_k\leq\underline{\tau}_k=\frac{1}{(1+\epsilon)L_{\mu_k}},$
\begin{align*}
	\tilde{F}(X_k,\mu_k)-\tilde{F}(\bar{X}_k,\mu_k)
	&\geq \frac{1-L_{\mu_k}\underline{\tau}_k}{2\underline{\tau}_k}\|X_k-\bar{X}_k\|^2_F\\
	&=\frac{\epsilon L_{\mu_k}}{2}\|X_k-\bar{X}_k\|^2_F,
\end{align*}
which means that \cref{ef} is satisfied. This together with the update scheme of $\tau_k$ indicates that the line-search procedures must terminate at some $\tau_k\geq \underline{\tau}_k\eta.$ By $\bar{\tau}_k=c\underline{\tau}_k$ and a direction calculation, we know that the line-search procedures terminate in at most $\lceil\log\frac{1}{c}/\log\eta\rceil+1$ steps.	
\end{proof}\par
Now we are ready to present the main result of this subsection.
\begin{theorem}\label{t1}
Let $\{X_k\}$ and $\{\mu_k\}$ be the sequences generated by ${\rm SGPC}$ or ${\rm SGRC}.$ Then any accumulation point of $\{X_k: k\in\mathcal{A}\}$ is a first-order stationary point of problem $\cref{ep1}.$
\end{theorem}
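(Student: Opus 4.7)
The plan is to (a) derive a tight upper bound $\|\bar{X}_k - X_k\|_F, \|\bar{X}_k - X_{k+1}\|_F = O(\mu_k^{3/2})$ for $k \in \mathcal{A}$, (b) feed this into the two bounds of \Cref{ds} to show that both stationarity residuals at $X_k$ vanish along $\mathcal{A}$, and (c) invoke the gradient consistency of the Moreau envelope (\Cref{env}(iii)) to extract a concrete element of $\partial F(X^*)$ satisfying the characterization in \Cref{xxip}.

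First I would construct the merit function $\phi_k := \tilde{F}(X_k,\mu_{k-1}) + \kappa\mu_{k-1}$ and prove a universal descent $\phi_{k+1} \leq \phi_k - D_k$, where $D_k := (\epsilon L_{\mu_k}/2)(\|\bar{X}_k - X_k\|_F^2 + \|\bar{X}_k - X_{k+1}\|_F^2)$. This follows by chaining the acceptance inequality \cref{ef} with \Cref{ds}(ii) to obtain $\tilde{F}(X_{k+1},\mu_k) \leq \tilde{F}(X_k,\mu_k) - D_k$, and then bridging $\mu_k$ to $\mu_{k-1}$ via the Lipschitz-in-$\mu$ constant $\kappa = L_f^2/2$ of $\tilde{F}(X,\cdot)$. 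The bound $\tilde{F}(X,\mu) \geq F(X) - \kappa\mu$ combined with $\{X_k\} \subseteq \mathcal{S}_{n,p}$ gives $\phi_k \geq \min_{X\in\mathcal{S}_{n,p}} F(X)$, so telescoping makes $\{D_k\}$ summable.

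Next, for $k \in \mathcal{A}$ the criterion \cref{mu} fails, namely $\phi_{k+1} > \phi_k - \alpha\mu_k^2$; combined with the universal descent above this gives $D_k < \alpha\mu_k^2$, and since $L_{\mu_k} \geq L_0/\mu_k$ it sharpens to $\|\bar{X}_k - X_k\|_F^2 + \|\bar{X}_k - X_{k+1}\|_F^2 \leq (2\alpha/(\epsilon L_0))\mu_k^3$. Combining with the line-search lower bound $\tau_k \geq \eta/((1+\epsilon)L_{\mu_k})$ and the uniform gradient bound $m_k \leq C_1$ of \Cref{mu0}(i) yields $\tau_k^{-1}+m_k = O(1/\mu_k)$. \Cref{ds}(i) then gives $\|(I_n - X_k X_k^\top)\nabla_X \tilde{F}(X_k,\mu_k)\|_F = O(\mu_k^{1/2})$, and \Cref{ds}(iii) gives $\|X_{k+1}^\top \nabla_X \tilde{F}(X_{k+1},\mu_k) - \nabla_X \tilde{F}(X_{k+1},\mu_k)^\top X_{k+1}\|_F = O(\mu_k^{1/2})$; both tend to $0$ along $\mathcal{A}$ since $\mu_k \to 0$ by \Cref{mu0}(ii).

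Finally, pick an accumulation point $X_{k_j} \to X^*$ with $k_j \in \mathcal{A}$. Since $\|X_{k_j+1} - X_{k_j}\|_F = O(\mu_{k_j}^{3/2}) \to 0$, also $X_{k_j+1} \to X^*$. By \Cref{env}(ii) the sequence $\nabla_Z {\rm env}_f(BX_{k_j},\mu_{k_j})$ is uniformly bounded, so along a further subsequence it converges to some $\xi$; by \Cref{env}(iii), $\xi \in \partial f(BX^*)$. Hence $\nabla_X \tilde{F}(X_{k_j},\mu_{k_j}) \to G^* := B^\top\xi + \nabla h(X^*)$, and \cref{epf} identifies $G^* \in \partial F(X^*)$. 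The Lipschitz-in-$X$ bound $L_{\mu_{k_j}}\|X_{k_j+1}-X_{k_j}\|_F = O(\mu_{k_j}^{1/2}) \to 0$ forces $\nabla_X \tilde{F}(X_{k_j+1},\mu_{k_j}) \to G^*$ as well, so passing to the limit in the two residual bounds yields $(I_n - X^*(X^*)^\top)G^* = 0$ and $(X^*)^\top G^* - (G^*)^\top X^* = 0$, and \Cref{xxip} identifies $X^*$ as a first-order stationary point. The main obstacle is the bootstrap in the second paragraph: summability of $\{D_k\}$ alone yields only $\|\bar{X}_k - X_k\|_F = o(\mu_k^{1/2})$, which is insufficient against the $1/\mu_k$ blow-up of $L_{\mu_k}$ inside \Cref{ds}(i) and (iii); it is the failure of criterion \cref{mu} on $\mathcal{A}$ that upgrades the rate to $O(\mu_k^{3/2})$ and explains why the conclusion is restricted to accumulation points of $\{X_k:k\in\mathcal{A}\}$ rather than of the full sequence.
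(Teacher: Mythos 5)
Your proposal is correct and follows essentially the same route as the paper's proof: the violation of the update criterion \cref{mu} on $\mathcal{A}$, combined with \cref{ef} and \Cref{ds}(ii), yields $\frac{\epsilon L_{\mu_k}}{2}(\|\bar{X}_k-X_k\|_F^2+\|\bar{X}_k-X_{k+1}\|_F^2)\leq\alpha\mu_k^2$, the factor $L_{\mu_k}\geq L_0/\mu_k$ upgrades this to the $O(\mu_k^{3/2})$ rate that survives division by $\mu_k$ in \Cref{ds}(i) and (iii), and gradient consistency of the Moreau envelope identifies the limiting gradient as an element of $\partial F(X_{\ast})$ satisfying \Cref{xxip}. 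The merit-function/summability framing in your first paragraph is a harmless (and correctly flagged as insufficient on its own) detour; otherwise the argument matches the paper step for step.
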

\begin{proof}
Since \cref{mu} is violated for $k\in\mathcal{A},$ we have that	
\begin{align}
\alpha\mu_k^2&\geq\tilde{F}(X_k,\mu_{k-1})+\kappa\mu_{k-1}-\tilde{F}(X_{k+1},\mu_k)-\kappa\mu_k\nonumber\\
&\geq \tilde{F}(X_k,\mu_{k})-\tilde{F}(X_{k+1},\mu_{k})\nonumber\\
&=\tilde{F}(X_k,\mu_{k})-\tilde{F}(\bar{X}_k,\mu_{k})+\tilde{F}(\bar{X}_{k},\mu_{k})-\tilde{F}(X_{k+1},\mu_{k}),\label{amu}
\end{align}
where the second inequality follows from the Lipschitz continuity of $\tilde{F}(X_k,\mu)$ with respect to $\mu.$ Now, according to \cref{ef} of SGPC and SGRC and Item (ii) of \Cref{ds}, we obtain further from \cref{amu} that
\begin{displaymath}
\alpha\mu_k^2\geq\frac{\epsilon L_{\mu_k}}{2}(\|\bar{X}_{k}-X_{k+1}\|^2_F+\|\bar{X}_{k}-X_k\|^2_F)
\end{displaymath}
This together with $\lim\limits_{k\to\infty}\mu_k=0$ yields that
\begin{equation}\label{mx}
	\frac{1}{\mu_k}(\|\bar{X}_{k}-X_{k+1}\|_F+\|\bar{X}_{k}-X_k\|_F)\to 0,\  k\in\mathcal{A}\to\infty.
\end{equation}
Hence, it follows immediately that
\begin{equation}\label{xk}
\lim\limits_{k\in\mathcal{A}\to\infty}\frac{1}{\mu_k}\|X_k-X_{k+1}\|_F=0.	
\end{equation}
By the Lipschitz continuity of $\nabla_{X}\tilde{F}(X,\mu_k)$ with respect to $X,$ we get that 
\begin{displaymath}
\|\nabla_{X}\tilde{F}(X_{k+1},\mu_k)-\nabla_{X}\tilde{F}(X_{k},\mu_k)\|_F\leq L_{\mu_k}\|X_{k+1}-X_{k}\|_F.
\end{displaymath}
Combining this and \cref{xk}, we deduce that
\begin{equation}\label{xk1}
\|\nabla_{X}\tilde{F}(X_{k+1},\mu_k)-\nabla_{X}\tilde{F}(X_{k},\mu_k)\|_F\to 0,\ k\in\mathcal{A}\to\infty.	
\end{equation}
On the other hand, invoking the stepsize $\tau_k\geq\underline{\tau}_k,$ we see from Item (i) of \Cref{ds} that
\begin{align*}
\|\bar{X}_{k}-X_{k}\|_F&\geq\frac{1}{(1+\epsilon)L_{\mu_k}+m_k}\|(I_n-X_kX_k^\top)\nabla_{X}\tilde{F}(X_k,\mu_k)\|_F\\	
&=\frac{\mu_k}{(1+\epsilon)L_{\mu_k}\mu_k+m_k\mu_k}\|(I_n-X_kX_k^\top)\nabla_{X}\tilde{F}(X_k,\mu_k)\|_F,\ \forall k\in\mathbb{N}.
\end{align*}
This together with Item (i) of \Cref{mu0} yields that
\begin{equation}\label{xbm}
\frac{1}{\mu_k}\|\bar{X}_{k}-X_{k}\|_F\geq\frac{1}{(1+\epsilon)(L_0+L_h\mu_0)+C_1\mu_0}\|(I_n-X_kX_k^\top)\nabla_{X}\tilde{F}(X_k,\mu_k)\|_F.
\end{equation}
Also, Item (iii) of \Cref{ds} leads to
\begin{align}
&\quad\frac{1}{\mu_k}\|\bar{X}_{k}-X_{k+1}\|_F\label{xb}\\
&\geq\frac{1}{2(2+\epsilon)(L_0+L_h\mu_0)}\|X_{k+1}^\top \nabla_{X}\tilde{F}(X_{k+1},\mu_k)-\nabla_{X}\tilde{F}(X_{k+1},\mu_k)^\top X_{k+1}\|_F.\nonumber
\end{align}
Suppose that $X_{\ast}$ is an accumulation point of $\{X_k: k\in\mathcal{A}\}$ and let $\{X_{k_j}: k_j\in\mathcal{A}\}$ be a subsequence such that $\lim\limits_{j\to\infty}X_{k_j}=X_{\ast}.$ Invoking \cref{xk}, we also know that $\lim\limits_{j\to\infty}X_{k_j+1}=X_{\ast}.$ Since $\{\nabla_{X}\tilde{F}(X_k,\mu_k): k\in\mathcal{A}\}$ is bounded, by passing to a further subsequence if necessary, we may assume without loss of generality that $\lim\limits_{j\to\infty}\nabla_{X}\tilde{F}(X_{k_j},\mu_{k_j})=\xi,$ which belongs to $\partial F(X_{\ast})$ due to Item (v) of \Cref{a2}. Using this and invoking \cref{xk}, we have upon passing to the limit in \cref{xbm} that 
\begin{equation}\label{xa}
(I_n-X_{\ast}X_{\ast}^{\top})\xi=0.	
\end{equation}
Furthermore, $\lim\limits_{j\to\infty}\nabla_{X}\tilde{F}(X_{k_j},\mu_{k_j})=\xi$ implies $\lim\limits_{j\to\infty}\nabla_{X}\tilde{F}(X_{k_j+1},\mu_{k_j+1})=\xi$ due to \cref{xk1}. Combining this and \cref{mx}, we obtain by passing to the limit in \cref{xb} that
\begin{equation}\label{xax}
X_{\ast}^{\top}\xi-\xi^{\top}X_{\ast}=0.
\end{equation}
By \Cref{xxip}, \cref{xa} and \cref{xax}, we finally conclude that $X_{\ast}$ is a first-order stationary point of problem $\cref{ep1}.$ This completes the proof.
\end{proof}
\subsection{Convergence analysis of SRGD}
\label{mrc}
This subsection is devoted to the convergence analysis of SRGD. We begin with some useful properties of the retraction mapping.
\begin{lemma}(\cite{boumal2019global,liu2019quadratic})\label{p7}
Let $\mathcal{M}$ be a compact embedded submanifold of an Euclidean space. There exist constants $M_1>0$ and $M_2>0$ such that for all $X\in\mathcal{M}$ and $\xi\in\mathcal{T}_{X}\mathcal{M},$ the following two inequalities hold:
	\begin{displaymath}
		\|{\rm Retr}_X(\xi)-X\|_F\leq M_1\|\xi\|_F,
	\end{displaymath}
	\begin{displaymath}
		\|{\rm Retr}_X(\xi)-(X+\xi)\|_F\leq M_2\|\xi\|^2_F.
	\end{displaymath}	
\end{lemma}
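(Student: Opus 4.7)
The plan is to leverage three ingredients: (a) the compactness of $\mathcal{M}$, (b) the smoothness of $\mathrm{Retr}:\mathcal{TM}\to\mathcal{M}$ (a retraction is a smooth map by definition), and (c) the two defining identities $\mathrm{Retr}_X(0)=X$ and $D\mathrm{Retr}_X(0)=\mathrm{id}_{\mathcal{T}_X\mathcal{M}}$ (the latter follows from condition (ii) in the definition). The approach is to first obtain both inequalities locally and uniformly near the zero section of $\mathcal{TM}$ via Taylor expansion, and then glue this with a crude global estimate that uses the boundedness of $\mathcal{M}$ to handle large tangent vectors.

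First I would work locally. Fix $X_0\in\mathcal{M}$ and pick a smooth local coordinate chart that also trivializes a neighborhood of the zero section of $\mathcal{TM}$ near $X_0$. In these coordinates the smooth map $\xi\mapsto\mathrm{Retr}_X(\xi)$ satisfies $\mathrm{Retr}_X(0)=X$ and $\partial_\xi\mathrm{Retr}_X(0)=I$. Taylor's theorem with remainder then gives constants $m_1(X_0),m_2(X_0)>0$ and a radius $r(X_0)>0$ such that, for all $X$ in a neighborhood of $X_0$ and $\|\xi\|_F\le r(X_0)$,
\begin{equation*}
\|\mathrm{Retr}_X(\xi)-X\|_F\le m_1(X_0)\|\xi\|_F,\qquad \|\mathrm{Retr}_X(\xi)-(X+\xi)\|_F\le m_2(X_0)\|\xi\|_F^2.
\end{equation*}
Compactness of $\mathcal{M}$ lets me extract a finite subcover from the open cover produced above, and taking the maximum of the resulting constants and the minimum of the radii yields uniform constants $\tilde M_1,\tilde M_2>0$ and a uniform radius $r>0$ for which both inequalities hold for every $X\in\mathcal{M}$ and every $\xi\in\mathcal{T}_X\mathcal{M}$ with $\|\xi\|_F\le r$.

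Next I would upgrade to the global statement covering arbitrary $\xi$. Compactness gives a finite diameter $D:=\sup\{\|Y-Z\|_F:Y,Z\in\mathcal{M}\}<\infty$, so automatically $\|\mathrm{Retr}_X(\xi)-X\|_F\le D$ for every $\xi$. When $\|\xi\|_F>r$, this immediately yields $\|\mathrm{Retr}_X(\xi)-X\|_F\le(D/r)\|\xi\|_F$, and the triangle inequality gives $\|\mathrm{Retr}_X(\xi)-(X+\xi)\|_F\le D+\|\xi\|_F\le(D/r^2+1/r)\|\xi\|_F^2$ since $\|\xi\|_F\ge r$. Setting $M_1:=\max\{\tilde M_1,D/r\}$ and $M_2:=\max\{\tilde M_2,D/r^2+1/r\}$ delivers the two inequalities for all $X\in\mathcal{M}$ and all $\xi\in\mathcal{T}_X\mathcal{M}$.

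The main obstacle is making the local Taylor constants uniform across $\mathcal{M}$; this is precisely where compactness is essential, since without it the derivatives of the remainder term in Taylor's formula could blow up as one moves around an unbounded manifold. Everything else (the triangle inequality and the diameter bound) is bookkeeping. A cleaner alternative, if one prefers not to patch charts, is to observe that the map $(X,\xi)\mapsto\|\mathrm{Retr}_X(\xi)-X\|_F/\|\xi\|_F$ (resp.\ $\|\mathrm{Retr}_X(\xi)-(X+\xi)\|_F/\|\xi\|_F^2$) extends continuously to the zero section with the value $1$ (resp.\ bounded by half the operator norm of the Hessian of $\mathrm{Retr}_X$ at $0$), so on the compact set $\{(X,\xi):\|\xi\|_F\le r\}\subset\mathcal{TM}$ it attains a finite supremum; the large-$\|\xi\|_F$ regime is then handled as above.
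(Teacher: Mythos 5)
The paper does not prove this lemma at all: it is imported verbatim from \cite{boumal2019global,liu2019quadratic} as a known fact about retractions on compact embedded submanifolds, so there is no in-paper argument to compare against. Judged on its own, your proof is correct and is essentially the standard argument behind the cited result: smoothness of $\mathrm{Retr}$ together with $\mathrm{Retr}_X(0)=X$ and $D\mathrm{Retr}_X(0)=\mathrm{id}$ gives the two bounds on a uniform tube $\{\|\xi\|_F\le r\}$ around the zero section (which, unlike the full tangent bundle, is compact, so Taylor remainders are uniformly controlled), and the large-$\xi$ regime is absorbed by the diameter bound, with the arithmetic $D+\|\xi\|_F\le(D/r^2+1/r)\|\xi\|_F^2$ for $\|\xi\|_F\ge r$ checking out. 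The only imprecision is in your closing aside: the ratio $\|\mathrm{Retr}_X(\xi)-(X+\xi)\|_F/\|\xi\|_F^2$ need not extend \emph{continuously} to the zero section (its limit can depend on the direction of approach through the quadratic term); what you actually need, and what your Taylor argument already delivers, is that it is bounded on a punctured compact neighborhood of the zero section. That does not affect the validity of the main argument.
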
\par
Next, we prove that the backtracking line-search of SRGD must terminate in a finite number of iterations.
\begin{proposition}\label{l10}
For any $k\geq 0,$ the backtracking line-search of ${\rm SRGD}$ terminates at some $\tau_k\geq\tilde{\tau}_k\eta$ in at most $T$ line-search steps, where
\begin{equation}\label{er6}
\tilde{\tau}_k=\frac{1}{M_1^2L_0/\mu_k+2M_2C_1+M_1^2L_h}
\end{equation} 
and 
\begin{equation}\label{er5}
T=\max\bigg\{1,\bigg\lceil \frac{\log L_0-\log (M_1^2L_0+2M_2C_1\mu_0+M_1^2L_h\mu_0)}{\log\eta}\bigg\rceil+1\bigg\}.\end{equation}
\end{proposition}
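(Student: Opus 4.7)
The plan is to verify the Armijo-type decrease condition \cref{emr} holds for all sufficiently small $\tau_k$ and then count how many backtracking steps, starting from the initial trial $\tau_k = L_{\mu_k}^{-1}$, are required in the worst case. The main vehicle is the descent lemma associated with the Lipschitz continuity of $\nabla_X \tilde{F}(\cdot, \mu_k)$ with constant $L_{\mu_k} = L_0/\mu_k + L_h$, combined with the retraction estimates in \Cref{p7} and the uniform gradient bound $\|\nabla_X\tilde{F}(X_k,\mu_k)\|_F \leq m_k \leq C_1$ from \Cref{mu0}(i).

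First I would apply the descent lemma at $Y_k := {\rm Retr}_{X_k}(-\tau_k V_k)$ to obtain
\begin{displaymath}
\tilde{F}(Y_k,\mu_k) \leq \tilde{F}(X_k,\mu_k) + \langle \nabla_X\tilde{F}(X_k,\mu_k), Y_k - X_k\rangle + \frac{L_{\mu_k}}{2}\|Y_k - X_k\|_F^2.
\end{displaymath}
Writing $Y_k - X_k = -\tau_k V_k + E_k$ with $E_k := Y_k - (X_k - \tau_k V_k)$, the second estimate in \Cref{p7} yields $\|E_k\|_F \leq M_2 \tau_k^2 \|V_k\|_F^2$, while the first gives $\|Y_k - X_k\|_F \leq M_1 \tau_k \|V_k\|_F$. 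Because $V_k$ is the orthogonal projection of $\nabla_X\tilde{F}(X_k,\mu_k)$ onto $\mathcal{T}_{X_k}\mathcal{S}_{n,p}$, self-adjointness of the projection gives $\langle \nabla_X\tilde{F}(X_k,\mu_k), V_k\rangle = \|V_k\|_F^2$, so $\langle \nabla_X\tilde{F}(X_k,\mu_k), -\tau_k V_k\rangle = -\tau_k \|V_k\|_F^2$. Cauchy--Schwarz together with the bound on $E_k$ and the gradient bound $C_1$ then yields $|\langle \nabla_X\tilde{F}(X_k,\mu_k), E_k\rangle| \leq C_1 M_2 \tau_k^2 \|V_k\|_F^2$. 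Assembling these estimates gives
\begin{displaymath}
\tilde{F}(Y_k,\mu_k) - \tilde{F}(X_k,\mu_k) \leq -\tau_k \|V_k\|_F^2 + \Big(C_1 M_2 + \tfrac{L_{\mu_k} M_1^2}{2}\Big)\tau_k^2 \|V_k\|_F^2,
\end{displaymath}
and the Armijo condition \cref{emr} follows whenever $\tau_k \leq \bigl(M_1^2 L_{\mu_k} + 2M_2 C_1\bigr)^{-1} = \tilde{\tau}_k$, which is exactly the formula claimed in \cref{er6}.

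For the step count, after $j$ backtrackings the trial stepsize equals $L_{\mu_k}^{-1}\eta^j$, and acceptance is guaranteed once $L_{\mu_k}^{-1}\eta^j \leq \tilde{\tau}_k$, i.e., $\eta^j \leq \tilde{\tau}_k L_{\mu_k}$. To get a bound independent of $k$, I would use $L_{\mu_k} \geq L_0/\mu_k$ together with $\mu_k \leq \mu_0$ to estimate
\begin{displaymath}
\tilde{\tau}_k L_{\mu_k} \geq \frac{L_0/\mu_k}{M_1^2(L_0/\mu_k + L_h) + 2M_2 C_1} = \frac{L_0}{M_1^2 L_0 + M_1^2 L_h \mu_k + 2M_2 C_1 \mu_k} \geq \frac{L_0}{M_1^2 L_0 + M_1^2 L_h \mu_0 + 2M_2 C_1 \mu_0}.
\end{displaymath}
Taking logarithms (noting $\log\eta < 0$) and rearranging gives the sufficient condition $j \geq (\log L_0 - \log(M_1^2 L_0 + 2M_2 C_1 \mu_0 + M_1^2 L_h \mu_0))/\log\eta$. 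If this lower bound is already $\leq 0$ (which occurs when the denominator is $\leq L_0$) then the initial trial succeeds and we take $T=1$; otherwise the ceiling of that expression plus one trial step yields the formula \cref{er5}. Finally, $\tau_k \geq \tilde{\tau}_k \eta$ follows from the backtracking rule, since the last rejection ensured $\tau_k/\eta > \tilde{\tau}_k$.

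The main obstacle will be the step that splits the linearization error into the two pieces $-\tau_k\|V_k\|_F^2$ and $\langle \nabla_X\tilde{F}(X_k,\mu_k), E_k\rangle$ and simultaneously controls the latter by $C_1 M_2 \tau_k^2 \|V_k\|_F^2$; the bookkeeping here is delicate because the retraction second-order estimate and the gradient bound must be combined in just the right way to reproduce the exact form of $\tilde{\tau}_k$ appearing in \cref{er6}. The subsequent step counting is then largely bookkeeping, provided one carefully uses $L_{\mu_k} \geq L_0/\mu_k$ and $\mu_k \leq \mu_0$ to convert the $k$-dependent quantity $\tilde{\tau}_k L_{\mu_k}$ into the $k$-independent bound that produces the precise constants inside the logarithm in \cref{er5}.
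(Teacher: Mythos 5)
Your proposal is correct and follows essentially the same route as the paper's proof: the descent lemma for $\nabla_X\tilde F(\cdot,\mu_k)$, the decomposition of ${\rm Retr}_{X_k}(-\tau_k V_k)-X_k$ into $-\tau_k V_k$ plus the retraction error controlled via \Cref{p7} and the gradient bound $C_1$, the identity $\langle\nabla_X\tilde F(X_k,\mu_k),V_k\rangle=\|V_k\|_F^2$ (used implicitly in the paper), and the same $k$-independent lower bound on $L_{\mu_k}\tilde\tau_k$ for the step count.
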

\begin{proof}
For any $k\geq 0,$ by the fact that $\nabla_{X}\tilde{F}(\cdot,\mu_k)$ is Lipschitz continuous with a constant $L_{\mu_k},$ we have that
\begin{align}
&\quad\tilde{F}({\rm Retr}_{X_k}(-\tau_k V_k),\mu_k)-\tilde{F}(X_{k},\mu_k)\nonumber\\
&\leq \langle \nabla_{X}\tilde{F}(X_k,\mu_k),{\rm Retr}_{X_k}(-\tau_k V_k)-X_k\rangle+\frac{L_{\mu_k}}{2}\|{\rm Retr}_{X_k}(-\tau_k V_k)-X_k\|^2_F\nonumber\\
&=\langle \nabla_{X}\tilde{F}(X_k,\mu_k),{\rm Retr}_{X_k}(-\tau_k V_k)-(X_k-\tau_k V_k)\rangle+\langle \nabla_{X}\tilde{F}(X_k,\mu_k),-\tau_k V_k\rangle\nonumber\\
&\quad+\frac{L_{\mu_k}}{2}\|{\rm Retr}_{X_k}(-\tau_k V_k)-X_k\|^2_F\nonumber\\
&\leq \|\nabla_{X}\tilde{F}(X_k,\mu_k)\|_F\|{\rm Retr}_{X_k}(-\tau_k V_k)-(X_k-\tau_k V_k)\|_F-\tau_k\|V_k\|^2_F\nonumber\\
&\quad+\frac{L_{\mu_k}}{2}\|{\rm Retr}_{X_k}(-\tau_k V_k)-X_k\|^2_F.\label{er3}
\end{align}
Invoking \Cref{mu0} (i), \Cref{p7} and $L_{\mu_k}=L_0/\mu_k+L_h,$ we obtain further from \cref{er3} that
\begin{align}
&\quad\tilde{F}({\rm Retr}_{X_k}(-\tau_k V_k),\mu_k)-\tilde{F}(X_{k},\mu_k)\nonumber\\
&\leq M_2C_1\tau_k^2\|V_k\|^2_F-\tau_k\|V_k\|^2_F+\frac{M_1^2L_{\mu_k}\tau_k^2}{2}\|V_k\|^2_F\nonumber\\
&=\tau_k\bigg(\bigg(\frac{M_1^2L_0}{2\mu_k}+\frac{M_1^2L_h}{2}+M_2C_1\bigg)\tau_k-1\bigg)\|V_k\|^2_F.\label{er4}
\end{align}
Set $\tilde{\tau}_k:=\frac{1}{M_1^2L_0/\mu_k+2M_2C_1+M_1^2L_h}.$ It then follows from $\cref{er4}$ that the backtracking line-search of SRGD terminates at some $\tau_k\geq\tilde{\tau}_k\eta.$\par
For the $k$-th iteration of SRGD, let $T_k\in\mathbb{N}_{+}$ be the number of line-search steps needed to achieve $\tau_k\leq \tilde{\tau}_k.$ Recall that the initial choice of $\tau_k$ is $1/L_{\mu_k},$ then we have that $\eta^{T_k-1}\leq L_{\mu_k}\tilde{\tau}_k.$ One can easily check that $L_{\mu_k}\tilde{\tau}_k\geq \frac{L_0}{M_1^2L_0+2M_2C_1\mu_0+M_1^2L_h\mu_0},$ for all $k\in\mathbb{N}.$ Hence, we conclude that the backtracking line-search steps of SRGD must terminate in at most $T$ line-search steps, where $T$ is given in \cref{er5}.
\end{proof}\par
Now we can establish the convergence of SRGD in the following theorem.
\begin{theorem}\label{t2}
Let $\{X_k\}$ and $\{\mu_k\}$ be the sequences generated by ${\rm SRGD}.$ Then any accumulation point of the sequence $\{X_k: k\in\mathcal{A}\}$ is a first-order stationary point of problem $\cref{ep1}.$
\end{theorem}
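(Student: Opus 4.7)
The plan is to mimic the structure of the proof of \Cref{t1}, replacing the SGPC/SGRC-specific bounds with the Armijo-type inequality that governs SRGD and the explicit lower bound on $\tau_k$ from \Cref{l10}. First I would exploit the failure of \cref{emr1} on $\mathcal{A}$. For $k\in\mathcal{A}$,
\begin{align*}
\alpha\mu_k^2 &\geq \tilde{F}(X_k,\mu_{k-1})+\kappa\mu_{k-1}-\tilde{F}(X_{k+1},\mu_k)-\kappa\mu_k \\
 &\geq \tilde{F}(X_k,\mu_k)-\tilde{F}(X_{k+1},\mu_k),
\end{align*}
where the second inequality uses the Lipschitz continuity of $\tilde{F}(X_k,\cdot)$ with constant $\kappa$. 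Combining this with the Armijo condition \cref{emr} gives
\begin{equation*}
\alpha\mu_k^2 \geq \frac{\tau_k}{2}\|V_k\|_F^2, \qquad k\in\mathcal{A}.
\end{equation*}

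Next I would cash in \Cref{l10} to get an explicit lower bound on $\tau_k$ in terms of $\mu_k$. Since $\tau_k\geq \tilde{\tau}_k\eta = \frac{\eta \mu_k}{M_1^2 L_0+\mu_k(2M_2C_1+M_1^2L_h)}$ and $\mu_k\leq\mu_0$, we have $\tau_k\geq C_2\mu_k$ where $C_2:=\eta/\bigl(M_1^2L_0+\mu_0(2M_2C_1+M_1^2L_h)\bigr)>0$. Plugging this in yields
\begin{equation*}
\|V_k\|_F^2 \leq \frac{2\alpha\mu_k^2}{\tau_k}\leq \frac{2\alpha}{C_2}\mu_k, \qquad k\in\mathcal{A}.
\end{equation*}
Since $\lim_{k\to\infty}\mu_k=0$ by \Cref{mu0} (ii), this forces $V_k\to 0$ along $\mathcal{A}$.

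Finally I would pass to subsequence limits. Let $X_\ast$ be an accumulation point of $\{X_k:k\in\mathcal{A}\}$ and let $X_{k_j}\to X_\ast$ with $k_j\in\mathcal{A}$. By \Cref{mu0} (i), $\{\nabla_X\tilde{F}(X_{k_j},\mu_{k_j})\}$ is bounded, so after passing to a further subsequence we may assume $\nabla_X\tilde{F}(X_{k_j},\mu_{k_j})\to\xi$. The gradient consistency (item (v) of \Cref{a2}), combined with \cref{nfx} and \Cref{env} (iii), gives $\xi\in \partial F(X_\ast)$. Since the projection $Y\mapsto {\rm Proj}_{\mathcal{T}_X\mathcal{S}_{n,p}}(Y)$ defined by \cref{proj} is jointly continuous in $(X,Y)$, and since $V_{k_j}={\rm Proj}_{\mathcal{T}_{X_{k_j}}\mathcal{S}_{n,p}}(\nabla_X\tilde{F}(X_{k_j},\mu_{k_j}))\to 0$, taking the limit shows ${\rm Proj}_{\mathcal{T}_{X_\ast}\mathcal{S}_{n,p}}(\xi)=0$. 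By \Cref{df1}, $X_\ast$ is a first-order stationary point of problem \cref{ep1}.

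The main obstacle is really just the first step: one has to verify that the Armijo bound from \cref{emr} together with the line-search lower bound $\tau_k\gtrsim \mu_k$ forces the \emph{Riemannian} gradient $V_k$ (rather than the Euclidean one) to vanish along $\mathcal{A}$. After that, compared with \Cref{t1}, no separate ``correction step'' analysis is needed, because $V_k\in\mathcal{T}_{X_k}\mathcal{S}_{n,p}$ already encodes both the horizontal and skew-symmetric components of the optimality condition \cref{xxi}; continuity of the projection in $X$ handles the passage to the limit in one stroke.
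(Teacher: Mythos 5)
Your proposal is correct and follows essentially the same route as the paper's proof: both use the violation of \cref{emr1} on $\mathcal{A}$ together with the Armijo condition \cref{emr} and the lower bound $\tau_k\geq\tilde{\tau}_k\eta$ from \Cref{l10} to show $\|V_k\|_F^2=O(\mu_k)\to 0$ along $\mathcal{A}$, and then conclude via gradient consistency and continuity of the tangent-space projection. The only cosmetic difference is that you package the stepsize bound as $\tau_k\geq C_2\mu_k$ before dividing, whereas the paper substitutes $\tilde{\tau}_k$ directly into the chain of inequalities.
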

\begin{proof}
Since \cref{emr1} is violated for $k\in\mathcal{A},$ by the Lipschitz continuity of $\tilde{F}(X_{k},\mu)$ with respect to $\mu,$ we see that for $k\in\mathcal{A}$
\begin{align}
\alpha\mu_k^2&\geq \tilde{F}(X_k,\mu_{k-1})+\kappa\mu_{k-1}-\tilde{F}(X_{k+1},\mu_k)-\kappa\mu_k\nonumber\\
&\geq \tilde{F}(X_k,\mu_{k})-\tilde{F}(X_{k+1},\mu_{k})\nonumber\\
&\geq \frac{\tilde{\tau}_k\eta}{2}\|V_{k}\|^2_F\nonumber\\
&=\frac{\eta}{2(M_1^2L_0/\mu_k+2M_2C_1+M_1^2L_h)}\|V_{k}\|^2_F,\label{egr}
\end{align}	
where the last inequality follows from \Cref{l10} and $\tilde{\tau}_k$ is defined in \cref{er6}. Using this and invoking $\lim\limits_{k\to\infty}\mu_k=0$, we deduce that
\begin{equation}\label{e53}
\|V_k\|_F\to0,\ \ \ k\in\mathcal{A}\to\infty.
\end{equation}
Now suppose that $X_{\ast}$ is an accumulation point of $\{X_{k}: k\in\mathcal{A}\}$ and let $\{X_{k_j}: k_j\in\mathcal{A}\}$ be a subsequence such that $\lim\limits_{j\to\infty}X_{k_j}=X_{\ast}.$ Since $\{\nabla_{X}\tilde{F}(X_k,\mu_k): k\in\mathcal{A}\}$ is bounded, by passing to a further subsequence if necessary, we may assume without loss of generality that $\lim\limits_{j\to\infty}\nabla_{X}\tilde{F}(X_{k_j},\mu_{k_j})=\xi,$ which belongs to $\partial F(X_{\ast})$ due to Item (v) of \Cref{a2}. Due to this and \cref{e53}, we have upon passing to the limit in \cref{e45} that
\begin{displaymath}
0=(I_n-X_{\ast}X_{\ast}^\top)\xi+\frac{1}{2}X_{\ast}(X^\top_{\ast}\xi-\xi^\top X_{\ast}),
\end{displaymath}
which indicates that
\begin{displaymath}
	0\in{\rm Proj}_{\mathcal{T}_{X_{\ast}}\mathcal{S}_{n,p}}(\partial F(X_{\ast})).
\end{displaymath}
This together with \Cref{df1} implies that $X_{\ast}$ is a first-order stationary point of problem $\cref{ep1}.$ 
\end{proof}	
\section{Numerical experiments}\label{sec:num}
In this section, we apply the proposed smoothing algorithms to the graph Fourier basis problem, which is a special instance of problem $\cref{ep1}.$ We compare the smoothing algorithm with three existing methods: SOC \cite{lai2014splitting}, PAMAL \cite{chen2016augmented} and ManPG-Ada \cite{chen2020proximal}. All the tests are conducted in MATLAB R2021a on a desktop with an Intel Core i7-11800H CPU (2.30GHz) and 16GB of RAM.\par
In our experiment, we first use GSPBOX \cite{perraudin2014gspbox} to generate eight different types of undirected graphs $\mathcal{G}=(\mathcal{V},\mathcal{E}):$ low$\_$stretch$\_$tree, path, comet, random$\_$ring, community, spiral, swiss$\_$roll and sensor. Their number of vertices and edges $(N,|\mathcal{E}|)$ are given in \Cref{tab:4}. Then the matrix $\tilde{B}\in\mathbb{R}^{|\mathcal{E}|\times N}$ in problem \cref{gfb1} is set as \cref{etb}, while $\tilde{V}\in\mathbb{R}^{N\times (N-1)}$ is produced in Matlab by ``null(ones(N,1))".
\begin{table}[htbp]
	\setlength{\abovecaptionskip}{0cm}
	\setlength{\belowcaptionskip}{-0.2cm}
	\small
		\caption{} \label{tab:4}
		\begin{center}
			\begin{tabular}{|c|c|c|c|c|} \hline
				 &\bf low$\_$stretch$\_$tree & \bf path & \bf comet &\bf random$\_$ring \\ \hline
				$(N,|\mathcal{E}|)$ &(4,6) &(8,14) &(12,22) &(16,32)\\ \hline
				  & \bf community &\bf spiral &\bf swiss$\_$roll &\bf sensor \\ \hline
				$(N,|\mathcal{E}|)$ &(22,222) &(35,198) &(110,782) &(120,842)\\ \hline
			\end{tabular}
		\end{center}
	\end{table}{\tiny}\par
The implementation details of the compared methods are given below.\par
\textbf{SOC and PAMAL}\ \ We apply SOC and PAMAL to GFB problem in the same way as those of \cite{sardellitti2017graph}. Both SOC and PAMAL first introduce an auxiliary variable $P\in\mathbb{R}^{N\times(N-1)}$ and rewrite problem \cref{gfb1} as
\begin{equation}\label{epsi}
\begin{cases}
\min\limits_{\tilde{Z},P\in\mathbb{R}^{N\times (N-1)}} \Psi(\tilde{B}\tilde{Z})\\
{\rm s.t.}\ \ \tilde{Z}=P, \tilde{Z}^\top Z_1=0_{N-1}, P^\top P=I_{N-1}.\end{cases}
\end{equation}
Then SOC employs a two block ADMM to solve \cref{epsi}, which updates the iterate as
\begin{displaymath}
	\begin{cases}
		\tilde{Z}_{k+1}=\arg\min\limits_{\tilde{Z}\in\mathbb{R}^{N\times (N-1)}} \{\Psi(\tilde{B}\tilde{Z})+\frac{\beta}{2}\|\tilde{Z}-P_k+\Lambda_k\|_F^2:\tilde{Z}^\top Z_1=0_{N-1}\},\\
		P_{k+1}=\arg\min\limits_{P\in\mathbb{R}^{N\times (N-1)}}\{\|P-(\tilde{Z}_{k+1}+\Lambda_k)\|_F^2:P^\top P=I_{N-1}\},\\
		\Lambda_{k+1}=\Lambda_k+\tilde{Z}_{k+1}-P_{k+1},
	\end{cases}
\end{displaymath}
where $\beta>0$ is a penalty parameter. In contrast to SOC, PAMAL utilizes an inexact augmented Lagrangian method to tackle \cref{epsi}, with the subproblems of $\tilde{Z}$ and $P$ being solved by the proximal alternating minimization (PAM) algorithm \cite{attouch2010proximal}. See \cite[Algorithm 2 and 3]{sardellitti2017graph} for detailed descriptions of PAMAL for \cref{epsi}. Both SOC and PAMAL are terminated when 
\begin{equation}\label{esp}
	\frac{\|\tilde{Z}_k-P_k\|_F}{{\rm max}(1,\|\tilde{Z}_k\|_F+\|P_k\|_F)}<10^{-5},
\end{equation}
which indicates that the violation of constraint $\tilde{Z}=P$ is small enough. We also terminate SOC when its iteration number hits 1000, and terminate PAMAL if its outer iteration number reaches 50. Since the tested graphs are undirected, we make use of their graph Laplacian matrices to generated proper initial points for iteration. Let $D\in\mathbb{R}^{N\times N}$ be a diagonal matrix with $i$-th diagonal entry being $\sum_{j=1}^Nw_{ij}.$ Then the Laplacian matrix is given by $D-W.$ Obviously, $D-W$ has an all-one eigenvector and we set the remaining orthonormal eigenvectors of $D-W$ as the columns of the initial point $\tilde{Z}_0=P_0\in\mathbb{R}^{N\times (N-1)}.$\par
As reported in \cite{chen2020proximal}, SOC and PAMAL are very sensitive to the choice of parameters involved. We make lots of efforts on tuning these parameters and finally adopt the following settings of them in our test. For SOC, we set the penalty parameter $\beta=110.$ For PAMAL, we set $\rho=10, \gamma=1.5, \tau=0.5, c_1=0.5, \Lambda_{{\rm min}}=-10^3, \Lambda_{{\rm max}}=10^3, \Lambda_0=0_{N\times (N-1)}$ and the inner iteration (i.e., using PAM to solve the subproblems of $\tilde{Z}$ and $P$) is terminated when the iteration number exceeds 5. We refer the readers to page 800-802 of \cite{sardellitti2017graph} for the meaning of these parameters in PAMAL.\par
\textbf{ManPG-Ada}\ \ In each iteration of ManPG-Ada for problem \cref{ep2}, the main computational cost lies in solving the following nonsmooth convex optimization problem
\begin{equation}\label{eq3}
V_k:=\arg\min\limits_{V\in\mathcal{T}_{X_k}\mathcal{S}_{N-1,N-1}} \Psi(\tilde{B}\tilde{V}(X_k+V))+\frac{1}{2t}\|V\|_F^2,
\end{equation} 
where $t>0$ is a proximal stepsize. Since the proximal operator of $\Psi\circ (\tilde{B}\tilde{V})$ has no closed-form solution, the semi-smooth Newton method, which is suggested in \cite{chen2020proximal}, can not be efficiently applied to \cref{eq3}. In our tests, we use the convex optimization toolbox CVX \cite{grant2008cvx} to solve \cref{eq3}. We try our best to tune the initial parameter $t$ and finally set $t=100/\|\tilde{B}\tilde{V}\|_2.$ Then in every iteration we time it by 1.01 if no line-search step is needed in the previous iteration, as in implemented in \cite{chen2020proximal}. Moreover, ManPG-Ada is terminated when $\|V_k/t\|_F^2<10^{-8}N^2$ or the maximum iteration number 20 is reached. We also initialize ManPG-Ada at $X_0=\tilde{V}^\top \tilde{Z}_0,$ where $\tilde{Z}_0\in\mathbb{R}^{N\times (N-1)}$ is the initial point for SOC and PAMAL.\par
\textbf{SGPC, SGRC and SRGD}\ \ The proposed algorithms are terminated when
\begin{displaymath}
	\|X_{k+1}-X_{k}\|_F<{\rm tol}_1\ \ \text{and}\ \ \alpha\mu_{k}<{\rm tol}_2,
\end{displaymath}
where ${\rm tol}_1>0$ and ${\rm tol}_2>0$ are two given stopping tolerance. We also terminate them if the iteration number exceeds 10000. Moreover, their inner iterations (i.e., the line-search procedure to find an appropriate stepsize) are terminated when the iteration number hits 50. In particular, we adopt the trick in ManPG-Ada \cite{chen2020proximal} for SRGD to adaptively choose a proper initial stepsize $\tau_k,$ which can improve the performance of SRGD in our experiment. For the three algorithms, we use the same parameters on updating the smoothing parameter $\mu_k:$ $\mu_0=0.1, \eta=0.5, \alpha=10^{-5}|\mathcal{E}|$ and $\sigma=0.8,$ while the stopping tolerances and other parameters are set as in \cref{tab:3}. In addition, this algorithms are all initialized at the same initial point as ManPG-Ada.
\begin{table}[htb]
\setlength{\abovecaptionskip}{0cm}
\setlength{\belowcaptionskip}{-0.2cm}
\small
\renewcommand\arraystretch{1.5}
\begin{center}
\caption{} \label{tab:3}
\begin{tabular}{|c|p{10.5cm}|}
\hline 	SGPC & ${\rm tol}_1=10^{-6}\sqrt{N-1}, {\rm tol}_2=10^{-7}(N-1), \epsilon=10^{-3}$ and $c=10^8.$\\ 
\hline  SGRC & ${\rm tol}_1=2\sqrt{N-1}\times 10^{-5}, {\rm tol}_2=3(N-1)\times 10^{-8}, \epsilon=10^{-3}$ and $c=10^8.$\\ 
\hline  SRGD & ${\rm tol}_1=10^{-6}\sqrt{N-1}, {\rm tol}_2=10^{-8}(N-1)$ and $\tau_0=\mu_0/\|\tilde{B}\tilde{V}\|^2_2.$\\  \hline
\end{tabular}	
\end{center}		
\end{table}\par
The computational results are presented in \Cref{tab:1}. The terms "Time", "Fval", "Orth", "Iter" stand for the CPU time, the function value, $\|X_{k}^\top X_k-I_{N-1}\|_F$ and the number of iterations, respectively. One can observe that the function values found by the proposed smoothing algorithms are very close to those reached by the other compared algorithms. In term of orthogonality, the proposed algorithms are comparable with ManPG-Ada, but much better than SOC and PAMAL, which is not surprising as SOC and PAMAL are both infeasible methods. Moreover, the proposed algorithms substantially outperform the other algorithms in the terms of CPU time. Finally, we also observe that SGPC and SRGD slightly outperform SGRC for most tested graphs.

\begin{center} 
	\begin{longtable}{|c|c|c|c|c|}
		\caption{Computational results on different Graphs} \label{tab:1}\\
		\hline	\endfirsthead
		\hline
		\endhead
		\hline
		\multicolumn{2}{r@{}}{Table 3 continued on the next page}
		\endfoot
		\hline
		\endlastfoot
		\multicolumn{5}{c}{ low$\_$stretch$\_$tree}   \\ \hline
		\bf Algorithm & \bf Time & \bf Fval & \bf Orth & \bf Iter \\ \hline
		SGPC &0.007 &6.000 &6.00e-16 &44 \\
		SGRC &0.012 &6.000 &1.05e-13 &191 \\
		SRGD &0.076 &6.000 &1.96e-15 &754 \\
		SOC  &31.923 &5.998 &3.20e-05 &102 \\
		PAMAL &14.426 &6.000 &2.28e-05 &11 \\
		ManPG-Ada &0.245 &6.000 &8.39e-16 &2 \\ \hline
		\multicolumn{5}{c}{path}   \\ \hline
		SGPC &0.015 &18.020 &3.70e-15 &156 \\
		SGRC &0.169 &18.701 &2.57e-14 &1974 \\
		SRGD &0.036 &18.699 &7.47e-15 &753 \\ 
		SOC  &52.550 &18.698 &7.67e-05 &187 \\
		PAMAL &18.716 &18.698 &3.71e-05 &17 \\
		ManPG-Ada &0.527 &18.699 &4.76e-15 &3 \\ \hline
		\multicolumn{5}{c}{ comet} \\ \hline	
		SGPC &0.074 &23.383 &4.44e-15 &543 \\
		SGRC &0.280 &23.664 &5.00e-14 &2468 \\
		SRGD &0.062 &23.382 &9.68e-15 &809 \\ 
		SOC  &63.944 &23.575 &7.76e-05 &249 \\
		PAMAL &26.907 &23.800 &1.23e-04 &24 \\ 
		ManPG-Ada &0.747 &23.544 &6.39e-15 &4 \\ \hline
		\multicolumn{5}{c}{ random$\_$ring} \\ \hline
		SGPC &0.050 &27.034 &8.48e-15 &218 \\
		SGRC &0.273 &27.031 &4.00e-14 &1768 \\
		SRGD &0.071 &27.029 &1.02e-14 &816 \\ 
		SOC  &23.854 &27.344 &8.51e-05 &93 \\
		PAMAL &24.889 &27.353 &1.38e-04 &22 \\ 
		ManPG-Ada &0.781 &27.032 &1.09e-14 &4 \\\hline
		\multicolumn{5}{c}{ community}   \\ \hline
		SGPC &0.298 &271.317 &1.09e-14 &362 \\
		SGRC &1.519 &271.265 &1.06e-13 &1919 \\
		SRGD &2.747 &271.218 &6.29e-15 &6029 \\ 
		SOC  &351.701 &271.647 &6.47e-05 &954 \\
	    PAMAL &40.528 &265.527 &5.22e-05 &24 \\
		ManPG-Ada &1.272 &271.831 &1.14e-14 &5 \\ \hline
		\multicolumn{5}{c}{spiral}   \\ \hline
		SGPC &0.728 &137.043 &1.49e-14 &391 \\
		SGRC &14.575 &137.041 &2.49e-13 &9919 \\
		SRGD &2.623 &137.015 &1.91e-14 &2862 \\
		SOC  &154.622 &137.204 &1.99e-04 &389 \\
		PAMAL &44.311 &135.786 &1.62e-04 &23 \\
		ManPG-Ada &3.402 &139.717 &1.00e-14 &8 \\ \hline
		\multicolumn{5}{c}{swiss$\_$roll}\\ \hline 
		SGPC &24.547 &245.554 &3.74e-14 &1590 \\
		SGRC &20.898 &247.872 &5.14e-13 &1472 \\
		SRGD &38.416 &244.856 &3.14e-14 &3874 \\
     	SOC  &1892.957 &246.271 &3.60e-04 &287 \\
	    PAMAL &522.412 &247.172 &3.56e-04 &18 \\
		ManPG-Ada &386.042 &247.463 &3.34e-14 &7 \\ \hline
		\multicolumn{5}{c}{sensor}   \\ \hline 
		SGPC &95.230 &272.477 &3.98e-14 &5187 \\
		SGRC &24.864 &276.311 &6.51e-13 &1519 \\
		SRGD &100.246 &272.959 &3.27e-14 &8560 \\
		SOC  &2521.135 &277.056 &3.00e-04 &294 \\
		PAMAL &832.880 &273.979 &4.20e-04 &22 \\
		ManPG-Ada &783.205 &276.167 &3.02e-14 &9 \\ \hline		
	\end{longtable}
\end{center}


\section{Conclusion}
\label{sec:clu}
In this paper, we propose three smoothing algorithms SGPC, SGRC and SRGD for solving a class of nonsmooth and nonconvex optimization problems over the Stiefel manifold (problem $\cref{ep1}$). The proposed algorithms are novel combinations of smoothing methods and GPC, GRC and RGD, which are designed for smooth optimization problems over the Stiefel manifold. It is worth noting that we adopt the Moreau envelope of the nonsmooth convex function $f$ as its smooth approximation function, and prove that the resulting smooth objective function satisfies all the requirements on a smooth approximation function (\Cref{a2}). Using this and a proper scheme for updating the smoothing parameter, we show that any accumulation point of the solution sequence generated by our algorithms is a stationary point of problem $\cref{ep1}.$ As demonstrated in the numerical experiments, the proposed algorithms substantially outperform SOC, PAMAL and ManPG-Ada in the terms of CPU time for solving the graph Fourier basis problem.

\bibliographystyle{siamplain}

\begin{thebibliography}{10}
		
		\bibitem{abrudan2008steepest}
		{\sc T.~E. Abrudan, J.~Eriksson, and V.~Koivunen}, {\em Steepest descent
			algorithms for optimization under unitary matrix constraint}, IEEE Trans.
		Signal Process., 56 (2008), pp.~1134--1147.
		
		\bibitem{abrudan2009conjugate}
		{\sc T.~E. Abrudan, J.~Eriksson, and V.~Koivunen}, {\em Conjugate gradient
			algorithm for optimization under unitary matrix constraint}, Signal Process.,
		89 (2009), pp.~1704--1714.
		
		\bibitem{absil2019collection}
		{\sc P.-A. Absil and S.~Hosseini}, {\em A collection of nonsmooth riemannian
			optimization problems}, in Nonsmooth Optimization and Its Applications,
		Internat. Ser. Numer. Math. 170, Springer, 2019, pp.~1--15.
		
		\bibitem{absil2009optimization}
		{\sc P.-A. Absil, R.~Mahony, and R.~Sepulchre}, {\em Optimization algorithms on
			matrix manifolds}, Princeton University Press, Princeton, NJ, 2009.
		
		\bibitem{absil2012projection}
		{\sc P.-A. Absil and J.~Malick}, {\em Projection-like retractions on matrix
			manifolds}, SIAM J. Optim., 22 (2012), pp.~135--158.
		
		\bibitem{attouch2010proximal}
		{\sc H.~Attouch, J.~Bolte, P.~Redont, and A.~Soubeyran}, {\em Proximal
			alternating minimization and projection methods for nonconvex problems: An
			approach based on the kurdyka-{\l}ojasiewicz inequality}, Math. Oper. Res.,
		35 (2010), pp.~438--457.
		
		\bibitem{barzilai1988two}
		{\sc J.~Barzilai and J.~M. Borwein}, {\em Two-point step size gradient
			methods}, IMA J. Numer. Anal., 8 (1988), pp.~141--148.
		
		\bibitem{bauschke2011convex}
		{\sc H.~H. Bauschke, P.~L. Combettes, et~al.}, {\em Convex analysis and
			monotone operator theory in Hilbert spaces}, vol.~408, Springer, 2011.
		
		\bibitem{bento2017iteration}
		{\sc G.~C. Bento, O.~P. Ferreira, and J.~G. Melo}, {\em Iteration-complexity of
			gradient, subgradient and proximal point methods on riemannian manifolds}, J.
		Optim. Theory Appl., 173 (2017), pp.~548--562.
		
		\bibitem{bian2020smoothing}
		{\sc W.~Bian and X.~Chen}, {\em A smoothing proximal gradient algorithm for
			nonsmooth convex regression with cardinality penalty}, SIAM J. Numer. Anal.,
		58 (2020), pp.~858--883.
		
		\bibitem{bishop1969manifolds}
		{\sc R.~L. Bishop and B.~O'Neill}, {\em Manifolds of negative curvature},
		Trans. Am. Math. Soc., 145 (1969), pp.~1--49.
		
		\bibitem{boumal2019global}
		{\sc N.~Boumal, P.-A. Absil, and C.~Cartis}, {\em Global rates of convergence
			for nonconvex optimization on manifolds}, IMA J. Numer. Anal., 39 (2019),
		pp.~1--33.
		
		\bibitem{boyd2011distributed}
		{\sc S.~Boyd, N.~Parikh, E.~Chu, B.~Peleato, J.~Eckstein, et~al.}, {\em
			Distributed optimization and statistical learning via the alternating
			direction method of multipliers}, Found. Trends{\textregistered} Mach.
		Learn., 3 (2011), pp.~1--122.
		
		\bibitem{cai2013sparse}
		{\sc T.~T. Cai, Z.~Ma, and Y.~Wu}, {\em Sparse pca: Optimal rates and adaptive
			estimation}, Ann. Stat., 41 (2013), pp.~3074--3110.
		
		\bibitem{chen1996class}
		{\sc C.~Chen and O.~L. Mangasarian}, {\em A class of smoothing functions for
			nonlinear and mixed complementarity problems}, Math. Program., 71 (1995),
		pp.~51--70.
		
		\bibitem{chen2020proximal}
		{\sc S.~Chen, S.~Ma, A.~Man-Cho~So, and T.~Zhang}, {\em Proximal gradient
			method for nonsmooth optimization over the stiefel manifold}, SIAM J. Optim.,
		30 (2020), pp.~210--239.
		
		\bibitem{chen2016augmented}
		{\sc W.~Chen, H.~Ji, and Y.~You}, {\em An augmented lagrangian method for
			$\ell_1$-regularized optimization problems with orthogonality constraints},
		SIAM J. Sci. Comput., 38 (2016), pp.~B570--B592.
		
		\bibitem{chen2012smoothing}
		{\sc X.~Chen}, {\em Smoothing methods for nonsmooth, nonconvex minimization},
		Math. Program., 134 (2012), pp.~71--99.
		
		\bibitem{clarke1990optimization}
		{\sc F.~H. Clarke}, {\em Optimization and nonsmooth analysis}, SIAM, 1990.
		
		\bibitem{edelman1998geometry}
		{\sc A.~Edelman, T.~A. Arias, and S.~T. Smith}, {\em The geometry of algorithms
			with orthogonality constraints}, SIAM J. Matrix Anal. Appl., 20 (1998),
		pp.~303--353.
		
		\bibitem{ferreira1998subgradient}
		{\sc O.~Ferreira and P.~Oliveira}, {\em Subgradient algorithm on riemannian
			manifolds}, J. Optim. Theory Appl., 97 (1998), pp.~93--104.
		
		\bibitem{ferreira2019iteration}
		{\sc O.~P. Ferreira, M.~S. Louzeiro, and L.~F. Prudente}, {\em
			Iteration-complexity of the subgradient method on riemannian manifolds with
			lower bounded curvature}, Optimization, 68 (2019), pp.~713--729.
		
		\bibitem{fukushima1998globally}
		{\sc M.~Fukushima, Z.-Q. Luo, and J.-S. Pang}, {\em A globally convergent
			sequential quadratic programming algorithm for mathematical programs with
			linear complementarity constraints}, Comput. Optim. Appl., 10 (1998),
		pp.~5--34.
		
		\bibitem{fukushima2002smoothing}
		{\sc M.~Fukushima, Z.-Q. Luo, and P.~Tseng}, {\em Smoothing functions for
			second-order-cone complementarity problems}, SIAM J. Optim., 12 (2002),
		pp.~436--460.
		
		\bibitem{gao2018new}
		{\sc B.~Gao, X.~Liu, X.~Chen, and Y.-x. Yuan}, {\em A new first-order
			algorithmic framework for optimization problems with orthogonality
			constraints}, SIAM J. Optim., 28 (2018), pp.~302--332.
		
		\bibitem{gao2019parallelizable}
		{\sc B.~Gao, X.~Liu, and Y.-x. Yuan}, {\em Parallelizable algorithms for
			optimization problems with orthogonality constraints}, SIAM J. Sci. Comput.,
		41 (2019), pp.~A1949--A1983.
		
		\bibitem{grant2008cvx}
		{\sc M.~Grant, S.~Boyd, and Y.~Ye}, {\em Cvx: Matlab software for disciplined
			convex programming}, 2008.
		
		\bibitem{hintermuller2013nonconvex}
		{\sc M.~Hinterm{\"u}ller and T.~Wu}, {\em Nonconvex t$v^{q}$-models in image
			restoration: Analysis and a trust-region regularization--based superlinearly
			convergent solver}, SIAM J. Imaging Sci., 6 (2013), pp.~1385--1415.
		
		\bibitem{hosseini2018line}
		{\sc S.~Hosseini, W.~Huang, and R.~Yousefpour}, {\em Line search algorithms for
			locally lipschitz functions on riemannian manifolds}, SIAM J. Optim., 28
		(2018), pp.~596--619.
		
		\bibitem{hosseini2017riemannian}
		{\sc S.~Hosseini and A.~Uschmajew}, {\em A riemannian gradient sampling
			algorithm for nonsmooth optimization on manifolds}, SIAM J. Optim., 27
		(2017), pp.~173--189.
		
		\bibitem{huang2019extension}
		{\sc W.~Huang and K.~Wei}, {\em An extension of fista to riemannian
			optimization for sparse pca}, arXiv preprint arXiv:1909.05485,  (2019).
		
		\bibitem{huang2022riemannian}
		{\sc W.~Huang and K.~Wei}, {\em Riemannian proximal gradient methods}, Math.
		Program., 194 (2022), pp.~371--413.
		
		\bibitem{kovnatsky2016madmm}
		{\sc A.~Kovnatsky, K.~Glashoff, and M.~M. Bronstein}, {\em Madmm: a generic
			algorithm for non-smooth optimization on manifolds}, in Eur. Conf. Comput.
		Vis., Springer, 2016, pp.~680--696.
		
		\bibitem{kreimer1992nondifferentiable}
		{\sc J.~Kreimer and R.~Y. Rubinstein}, {\em Nondifferentiable optimization via
			smooth approximation: General analytical approach}, Ann. Oper. Res., 39
		(1992), pp.~97--119.
		
		\bibitem{lai2014splitting}
		{\sc R.~Lai and S.~Osher}, {\em A splitting method for orthogonality
			constrained problems}, J. Sci. Comput., 58 (2014), pp.~431--449.
		
		\bibitem{li2021weakly}
		{\sc X.~Li, S.~Chen, Z.~Deng, Q.~Qu, Z.~Zhu, and A.~Man-Cho~So}, {\em Weakly
			convex optimization over stiefel manifold using riemannian subgradient-type
			methods}, SIAM J. Optim., 31 (2021), pp.~1605--1634.
		
		\bibitem{liu2019quadratic}
		{\sc H.~Liu, A.~M.-C. So, and W.~Wu}, {\em Quadratic optimization with
			orthogonality constraint: explicit {\l}ojasiewicz exponent and linear
			convergence of retraction-based line-search and stochastic variance-reduced
			gradient methods}, Math. Program., 178 (2019), pp.~215--262.
		
		\bibitem{ma2013sparse}
		{\sc Z.~Ma}, {\em Sparse principal component analysis and iterative
			thresholding}, Ann. Stat., 41 (2013), pp.~772--801.
		
		\bibitem{manton2002optimization}
		{\sc J.~H. Manton}, {\em Optimization algorithms exploiting unitary
			constraints}, IEEE T. Signal Proces., 50 (2002), pp.~635--650.
		
		\bibitem{moreau1965proximite}
		{\sc J.-J. Moreau}, {\em Proximit{\'e} et dualit{\'e} dans un espace
			hilbertien}, B. Soc. Math. Fr., 93 (1965), pp.~273--299.
		
		\bibitem{nesterov2005smooth}
		{\sc Y.~Nesterov}, {\em Smooth minimization of non-smooth functions}, Math.
		Program., 103 (2005), pp.~127--152.
		
		\bibitem{nesterov2007smoothing}
		{\sc Y.~Nesterov}, {\em Smoothing technique and its applications in
			semidefinite optimization}, Math. Program., 110 (2007), pp.~245--259.
		
		\bibitem{nishimori2005learning}
		{\sc Y.~Nishimori and S.~Akaho}, {\em Learning algorithms utilizing
			quasi-geodesic flows on the stiefel manifold}, Neurocomputing, 67 (2005),
		pp.~106--135.
		
		\bibitem{orabona2012prisma}
		{\sc F.~Orabona, A.~Argyriou, and N.~Srebro}, {\em Prisma: Proximal iterative
			smoothing algorithm}, arXiv preprint arXiv:1206.2372,  (2012).
		
		\bibitem{ozolicnvs2013compressed}
		{\sc V.~Ozoli{\c{n}}{\v{s}}, R.~Lai, R.~Caflisch, and S.~Osher}, {\em
			Compressed modes for variational problems in mathematics and physics}, Proc.
		Natl. Acad. Sci. U.S.A., 110 (2013), pp.~18368--18373.
		
		\bibitem{perraudin2014gspbox}
		{\sc N.~Perraudin, J.~Paratte, D.~Shuman, L.~Martin, V.~Kalofolias,
			P.~Vandergheynst, and D.~K. Hammond}, {\em Gspbox: A toolbox for signal
			processing on graphs}, arXiv preprint arXiv:1408.5781,  (2014).
		
		\bibitem{pesenson2008sampling}
		{\sc I.~Pesenson}, {\em Sampling in paley-wiener spaces on combinatorial
			graphs}, Trans. Am. Math. Soc., 360 (2008), pp.~5603--5627.
		
		\bibitem{rockafellar2009variational}
		{\sc R.~T. Rockafellar and R.~J.-B. Wets}, {\em Variational analysis},
		vol.~317, Springer Science \& Business Media, 2009.
		
		\bibitem{sandryhaila2013discrete}
		{\sc A.~Sandryhaila and J.~M. Moura}, {\em Discrete signal processing on
			graphs}, IEEE Trans. Signal Process., 61 (2013), pp.~1644--1656.
		
		\bibitem{sandryhaila2014discrete}
		{\sc A.~Sandryhaila and J.~M. Moura}, {\em Discrete signal processing on
			graphs: Frequency analysis}, IEEE Trans. Signal Process., 62 (2014),
		pp.~3042--3054.
		
		\bibitem{sardellitti2017graph}
		{\sc S.~Sardellitti, S.~Barbarossa, and P.~Di~Lorenzo}, {\em On the graph
			fourier transform for directed graphs}, IEEE J. Sel. Topics Signal Process.,
		11 (2017), pp.~796--811.
		
		\bibitem{shuman2013emerging}
		{\sc D.~I. Shuman, S.~K. Narang, P.~Frossard, A.~Ortega, and P.~Vandergheynst},
		{\em The emerging field of signal processing on graphs: Extending
			high-dimensional data analysis to networks and other irregular domains}, IEEE
		Signal Process. Mag., 30 (2013), pp.~83--98.
		
		\bibitem{shuman2016vertex}
		{\sc D.~I. Shuman, B.~Ricaud, and P.~Vandergheynst}, {\em Vertex-frequency
			analysis on graphs}, Appl. Comput. Harmon. Anal., 40 (2016), pp.~260--291.
		
		\bibitem{singh2016graph}
		{\sc R.~Singh, A.~Chakraborty, and B.~Manoj}, {\em Graph fourier transform
			based on directed laplacian}, in in Proc. Int. Conf. Signal Process. Commun.,
		IEEE, 2016, pp.~1--5.
		
		\bibitem{tang2012unsupervised}
		{\sc J.~Tang and H.~Liu}, {\em Unsupervised feature selection for linked social
			media data}, in Proceedings of the 18th ACM SIGKDD international conference
		on Knowledge discovery and data mining, 2012, pp.~904--912.
		
		\bibitem{ulfarsson2008sparse}
		{\sc M.~O. Ulfarsson and V.~Solo}, {\em Sparse variable pca using geodesic
			steepest descent}, IEEE Trans. Signal Process., 56 (2008), pp.~5823--5832.
		
		\bibitem{wang2020multipliers}
		{\sc L.~Wang, B.~Gao, and X.~Liu}, {\em Multipliers correction methods for
			optimization problems over the stiefel manifold}, arXiv preprint
		arXiv:2011.14781,  (2020).
		
		\bibitem{wang2019global}
		{\sc Y.~Wang, W.~Yin, and J.~Zeng}, {\em Global convergence of admm in
			nonconvex nonsmooth optimization}, J. Sci. Comput., 78 (2019), pp.~29--63.
		
		\bibitem{wen2013feasible}
		{\sc Z.~Wen and W.~Yin}, {\em A feasible method for optimization with
			orthogonality constraints}, Math. Program., 142 (2013), pp.~397--434.
		
		\bibitem{xiao2020class}
		{\sc N.~Xiao, X.~Liu, and Y.-x. Yuan}, {\em A class of smooth exact penalty
			function methods for optimization problems with orthogonality constraints},
		Optim. Methods Softw.,  (2020), pp.~1--37.
		
		\bibitem{xiao2021exact}
		{\sc N.~Xiao, X.~Liu, and Y.-x. Yuan}, {\em Exact penalty function for
			$\ell_{2,1}$ norm minimization over the stiefel manifold}, SIAM J. Optim., 31
		(2021), pp.~3097--3126.
		
		\bibitem{yang2014optimality}
		{\sc W.~H. Yang, L.-H. Zhang, and R.~Song}, {\em Optimality conditions for the
			nonlinear programming problems on riemannian manifolds}, Pac. J. Optim., 10
		(2014), pp.~415--434.
		
		\bibitem{yang2011l2}
		{\sc Y.~Yang, H.~T. Shen, Z.~Ma, Z.~Huang, and X.~Zhou}, {\em $\ell_{2,1}$-norm
			regularized discriminative feature selection for unsupervised}, in
		Twenty-second international joint conference on artificial intelligence,
		2011.
		
		\bibitem{yau1974non}
		{\sc S.-T. Yau}, {\em Non-existence of continuous convex functions on certain
			riemannian manifolds}, Math. Ann., 207 (1974), pp.~269--270.
		
		\bibitem{zhang2016first}
		{\sc H.~Zhang and S.~Sra}, {\em First-order methods for geodesically convex
			optimization}, in Proceedings of the 29th Annual Conference on Learning
		Theory, 2016, pp.~1617--1638.
		
		\bibitem{zhang2020primal}
		{\sc J.~Zhang, S.~Ma, and S.~Zhang}, {\em Primal-dual optimization algorithms
			over riemannian manifolds: an iteration complexity analysis}, Math. Program.,
		184 (2020), pp.~445--490.
		
		\bibitem{zhu2012approximating}
		{\sc X.~Zhu and M.~Rabbat}, {\em Approximating signals supported on graphs}, in
		IEEE Int. Conf. Acoust., Speech, Signal Process., IEEE, 2012, pp.~3921--3924.
		
	\end{thebibliography}
	
\end{document}